\documentclass[12pt,reqno]{amsart}

\usepackage{amssymb}

\usepackage[all]{xy}

\numberwithin{equation}{section}

\newtheorem{theorem}{Theorem}[section]
\newtheorem{proposition}[theorem]{Proposition}
\newtheorem{lemma}[theorem]{Lemma}

\theoremstyle{definition}
\newtheorem{definition}[theorem]{Definition}
\newtheorem{remark}[theorem]{Remark}

\begin{document}

\baselineskip=15pt

\title[Opers on transversely holomorphic foliations]{Opers on transversely
holomorphic foliations}

\author[I. Biswas]{Indranil Biswas}

\address{Department of Mathematics, Shiv Nadar University, NH91, Tehsil
Dadri, Greater Noida, Uttar Pradesh 201314, India}

\email{indranil.biswas@snu.edu.in, indranil29@gmail.com}

\author[S. Dumitrescu]{Sorin Dumitrescu}

\address{Universit\'e C\^ote d'Azur, CNRS, LJAD, France}

\email{dumitres@unice.fr}

\subjclass[2010]{}

\keywords{Transversely holomorphic foliation, projective structure, opers, jet bundle, 
connection}

\date{}

\begin{abstract}
For real codimension two $C^\infty$ foliations, we study the transversely complex 
structures, transversely complex projective structures and transverse opers associated
to the foliation. We prove 
a uniqueness theorem for the transversely holomorphic ${\mathbb 
C}{\mathbb P}^1$--bundle naturally associated with a transversely complex projective 
structures. A similar uniqueness theorem is proved for the transversely holomorphic
filtered ${\mathbb C}{\mathbb P}^{r-1}$--bundle naturally associated to a transverse
${\rm PGL}(r,{\mathbb C})$--oper.
\end{abstract}

\maketitle

\tableofcontents

\section{Introduction}

The study of second-order linear differential equations, and of the associated Riccati 
equations, on Riemann surfaces is intimately related to the geometry of {\it complex 
projective structures} (i.e., an atlas with transition functions in ${\rm PGL}(2,{\mathbb 
C})$) and provides a major tool for a strategy leading to a proof of the uniformization 
theorem for Riemann surfaces \cite{Po}; for a modern treatment see, for instance, 
\cite{Gu1,Gu2}, \cite{LM} and \cite[Chapter 9]{StG}.

This classical correspondence admits a natural reformulation in the modern language of 
{\it opers} introduced by Beilinson and Drinfeld \cite{BD1,BD2} (building on the seminal 
work of Drinfeld and Sokolov \cite{DS1, DS2}): rank $r$ opers provide an intrinsic 
geometric framework for the study of linear differential equations of order $r$. Roughly 
speaking, an oper is a connection on a filtered bundle satisfying a transversality 
condition with respect to the filtration. In rank two, ${\rm PGL}(2,{\mathbb C})$-opers 
can be identified with the above mentioned complex projective structures.

The concept of oper was found to be very important, not only in the study of ordinary 
differential equations, but also in many other topics as geometric Langlands 
correspondence, nonabelian Hodge theory and also in some branches of mathematical 
physics; see, for example, \cite{BF}, \cite{DFKMMN}, \cite{FT}, \cite{FG}, \cite{FG2}, 
\cite{CS}, \cite{Fr}, \cite{Fr2}, \cite{BSY} and references therein. Nowadays the study 
of opers have been firmly established as an important topic
 in mathematics and mathematical physics. For instance, important progress in the 
understanding of opers was worked out in \cite{BD1, BD2, FG, FG2, AB, W, ABF}.

We introduce and study here {\it transverse ${\rm PGL}(r,{\mathbb C})$-opers} for a real 
codimension two foliation on a smooth manifold. We start by defining {\it transversely 
complex structures} and then compatible {\it transversely complex projective structures}.
The transverse direction being of complex dimension one, a transverse complex 
projective structure can be seen as the geometric manifestation of a second-order 
differential equation in the transverse variable, preserved by the holonomy of the 
foliation. This naturally generalizes to give rise to the notion of a {\it transverse 
oper} (it introduce in Definition \ref{def3}) which provides a coordinate-free 
framework for encoding the transversely complex differential equation in terms of a 
filtered flat connection satisfying a transversality condition with respect to the 
filtration.
 
Notice that the global classification of codimension one holomorphic foliations with 
transverse complex projective structures was worked out by several authors (see, for 
instance, \cite{LP,LPT,Sc} and references therein). It was recently proved in 
\cite{BD,FMP} that, in general, holomorphic foliations on compact complex K\"ahler 
surfaces do not admit compatible transverse complex projective structures.
 
Our study here of transverse opers is local. First we characterize real codimension two 
foliations with a transversely complex projective structure in terms of an Ehresmann type 
data, namely a transversely principal ${\rm PGL}(2,{\mathbb C})$--bundle endowed with a 
transversely holomorphic structure and a compatible flat connection together with a 
holomorphic section of the associated transversely holomorphic ${\mathbb C}{\mathbb 
P}^1$--bundle satisfying a transversality condition with respect to the induced 
connection (see Theorem \ref{thm0} here and compare with \cite{LP} and with \cite[Lemma 
4.3]{BD}). One main result of our paper proves that the associated transversely 
holomorphic ${\mathbb C}{\mathbb P}^1$--bundle is independent of the transverse complex 
projective structure. More precisely, Theorem \ref{thm1} asserts that the above 
transversely holomorphic ${\mathbb C}{\mathbb P}^1$--bundle is isomorphic to the 
projectivization of the transversely holomorphic jet bundle of order 1 (which is uniquely 
determined by the transversely complex structure) and its transversely holomorphic section 
is given by a natural section --- obtained from the exact sequence of jet bundles -- of
this transverse jet bundle of order one. This also 
enables us to parametrize the space of transversely complex projective structures to a 
given transversely complex foliation (see Proposition \ref{prop1} which
generalizes \cite[Lemma 4.1]{BD}).

A same uniqueness result is proved here in the case of transverse ${\rm PGL}(r,{\mathbb 
C})$-opers. In this case we prove that the associated transversely holomorphic filtered 
${\mathbb C}{\mathbb P}^{r-1}$--bundle is independent of the oper and it is canonically 
isomorphic to the projectivization of a transversely holomorphic jet bundle of order 
$r-1$ (see Theorem \ref{thm2}). Our proof establish the compatibility of the above 
transversely holomorphic filtered ${\mathbb C}{\mathbb P}^{r-1}$--bundle with the 
transversely holomorphic jet bundle of order $r-1$ endowed with its natural filtration 
by lower order transverse jet bundles.

\section{Foliations and transversely projective structures}

\subsection{Partial connections}

Let $M$ be a connected $C^\infty$ manifold of dimension $d+2$, where $d$ is a 
nonnegative integer. Let
$$
{\mathcal F}\ \subset\ T^{\mathbb R}M
$$
be a $C^\infty$ subbundle, of rank $d$, of the real tangent bundle $T^{\mathbb R}M$. Assume that
${\mathcal F}$ is closed under the Lie bracket operation on the sheaf of $C^\infty$ vector
fields on $M$. In other words, the distribution ${\mathcal F}$ is integrable, and it
defines a $C^\infty$ nonsingular foliation on $M$ of dimension $d$. Let
\begin{equation}\label{e1}
\phi\ : \ (T^{\mathbb R}M)^* \ \longrightarrow\ {\mathcal F}^*
\end{equation}
be the dual of the inclusion map ${\mathcal F}\,\hookrightarrow\, T^{\mathbb R}M$. The leaf-wise
exterior derivation produces a homomorphism of sheaves
\begin{equation}\label{d}
d_{\mathcal F} \ :\ \bigwedge\nolimits^i {\mathcal F}^* \ \longrightarrow\
\bigwedge\nolimits^{i+1} {\mathcal F}^*
\end{equation}
for all $i\, \geq\, 0$. The following is a commutative diagram of homomorphisms of sheaves:
$$
\begin{matrix}
\bigwedge\nolimits^i (T^{\mathbb R}M)^* & \xrightarrow{\,\,\, \wedge^i \phi\,\,\,}&
\bigwedge\nolimits^i {\mathcal F}^*\\
\,\, \Big\downarrow d && \,\,\,\, \Big\downarrow d_{\mathcal F}\\
\bigwedge\nolimits^{i+1} (T^{\mathbb R}M)^* & \xrightarrow{\,\,\, \wedge^{i+1} \phi\,\,\,}&
\bigwedge\nolimits^{i+1} {\mathcal F}^*
\end{matrix}
$$
where $\bigwedge^j \phi\, :\, \bigwedge\nolimits^j (T^{\mathbb R}M)^*\,\longrightarrow\,
\bigwedge\nolimits^{j+1} (T^{\mathbb R}M)^* $ is the homomorphism induced by
the surjective map $\phi$ in \eqref{e1}. In particular, we have $d_{\mathcal F}(f)\,=\,
\phi(df)$ for all locally defined $C^\infty$ functions $f$.

Take a $C^\infty$ vector bundle $V$, which may be a real or complex one, on $M$. A \textit{partial
connection} on $V$ is a $C^\infty$ differential operator of first order
$$
D \ : \ V \ \longrightarrow\ V\otimes {\mathcal F}^*
$$
such that $D(fs) \,=\, f\cdot D(s)+ s\otimes d_{\mathcal F}(f)$ for all locally defined $C^\infty$
functions $f$ on $M$ and all locally defined $C^\infty$ sections $s$ of $V$. Denote
by ${\mathcal K}(D)$ the following composition of differential operators:
$$
V \ \xrightarrow{\,\,\, D\,\,\,} \ V\otimes {\mathcal F}^* \ \xrightarrow{\,\,\,
\widetilde{D\otimes{\rm Id}_{{\mathcal F}^*}}+ \text{Id}_V\otimes d_{\mathcal F} \,\,\,}\
V\otimes \bigwedge\nolimits^2 {\mathcal F}^*,
$$
where $d_{\mathcal F}$ is the homomorphism in \eqref{d} and
$\widetilde{D\otimes{\rm Id}_{{\mathcal F}^*}}$ is the composition of homomorphisms
$D\otimes_{\mathbb R}{\rm Id}_{{\mathcal F}^*}\, :\, V\otimes_{\mathbb R} {\mathcal F}^*\, \longrightarrow\,
V\otimes {\mathcal F}^*\otimes_{\mathbb R} {\mathcal F}^*$ with ${\rm Id}_V\otimes p\, :\,
V\otimes {\mathcal F}^*\otimes_{\mathbb R} {\mathcal F}^*\,\longrightarrow\, 
V\otimes \bigwedge\nolimits^2 {\mathcal F}^*;$ here $p$ is the natural projection
${\mathcal F}^*\otimes {\mathcal F}^*\,\longrightarrow\,\bigwedge\nolimits^2 {\mathcal F}^*$.
It is straightforward to check that
${\mathcal K}(D)(fs)\,=\, f{\mathcal K}(D)(s)$ for all locally defined $C^\infty$ functions
$f$ on $M$ and all locally defined $C^\infty$ sections $s$ of $V$. This implies that
\begin{equation}\label{e2}
{\mathcal K}(D)\ \in\ C^\infty\left(M,\,\,\, \text{End}(V)\otimes \bigwedge\nolimits^2
{\mathcal F}^*\right);
\end{equation}
this section ${\mathcal K}(D)$ is called the curvature of $D$. The partial connection $D$ is
called \textit{integrable}
(or \textit{flat}) if we have ${\mathcal K}(D)\,=\, 0$.

Let
\begin{equation}\label{e3}
N \ :=\ (T^{\mathbb R}M)/{\mathcal F}
\end{equation}
be the normal bundle of the foliation $\mathcal F$. Since we have $[{\mathcal F},\, {\mathcal F}]\, \subset\,
{\mathcal F}$, the Lie bracket operation of vector fields ${\mathcal F}\otimes_{\mathbb R} T^{\mathbb R}M
\, \longrightarrow\, T^{\mathbb R}M$ produces a homomorphism of sheaves
$$\widetilde{D}^N\ :\ {\mathcal F}\otimes_{\mathbb R} (T^{\mathbb R}M/{\mathcal F})\,= \,
{\mathcal F}\otimes_{\mathbb R} N \, \longrightarrow\, (T^{\mathbb R}M/{\mathcal F})\,=N,$$
where $N$ is the normal bundle in \eqref{e3}. Note that
$\widetilde{D}^N ((f\cdot v)\otimes w)\,=\,f\cdot \widetilde{D}^N (v\otimes w)$ for all
locally defined smooth sections $v$ (respectively, $w$) of ${\mathcal F}$ (respectively, $N$) and all
locally defined smooth functions $f$ on $M$. Hence $\widetilde{D}^N$ produces a homomorphism of sheaves
\begin{equation}\label{e4}
D^N\, :\, N\, \longrightarrow\, N\otimes{\mathcal F}^* .
\end{equation}
It is straightforward to check that $D^N$ is a partial connection on $N$ in the direction of
$\mathcal F$. The Jacobi identity for the Lie bracket operation of vector fields on $M$ ensures that the
partial connection $D^N$ in \eqref{e4} is actually integrable.

The partial connection $D^N$ in \eqref{e4} is known as the Bott partial connection.

\subsection{Transversely projective structure}\label{se2.2}

\begin{definition}\label{def1}
A {\it transversely complex structure} on $\mathcal F$ is defined by data
\begin{equation}\label{e5}
\{(U_i,\, \sigma_i)\}_{i\in I}
\end{equation}
such that
\begin{enumerate}
\item{} each $U_i$ is an open subset of $M$ with $\bigcup_{i\in I} U_i \,=\, M$,

\item{} each $\sigma_i$ is a $C^\infty$ submersion of $U_i$ to an open subset $D_i$ of ${\mathbb C}
{\mathbb P}^1$ satisfying the condition that the subbundle ${\mathcal F}\big\vert_{U_i} \, \subset\,
T^{\mathbb R} U_i$ coincides with the kernel of the differential map $d\sigma_i\, :\, TU_i \,
\longrightarrow \, \sigma^*_i TD_i$ for the submersion $\sigma_i$, and

\item{} for every pair $i,\,j\,\in\, I$, and every connected component
$U^c_{i,j}\, \subset\, U_i\cap U_j$, there is a commutative diagram of maps
\begin{equation}\label{e6}
\begin{matrix}
U^c_{i,j} & \stackrel{\rm Id}{\longrightarrow} & U^c_{i,j}\\
\Big\downarrow\vcenter{\rlap{$\sigma_i$}} &&
\Big\downarrow\vcenter{\rlap{$\sigma_j$}}\\
\sigma_i(U^c_{i,j}) & \xrightarrow{\,\,\, f^c_{i,j}\,\,\,} &
\sigma_j(U^c_{i,j})
\end{matrix}
\end{equation}
where $f^c_{i,j}$ is a holomorphic map (see \cite{Go}). Note that
$f^c_{i,j}$ is uniquely determined by the above commutative diagram.
\end{enumerate}
Two such data $\{(U_i,\, \sigma_i)\}_{i\in I}$ and $\{(U_j,\,
\sigma_j)\}_{j\in J}$ are called {\it equivalent} if their union, namely
$$
\{(U_i,\, \sigma_i)\}_{i\in I\cup J},
$$
also satisfies the above three conditions.
A {\it transversely complex} structure on $\mathcal F$ is
an equivalence class of data $\{(U_i,\, \sigma_i)\}_{i\in I}$ of the above type satisfying
the above three conditions.
\end{definition}

To give another description of the transversely complex structures,
let
\begin{equation}\label{ej}
J\, :\, N\, \longrightarrow\, N
\end{equation}
be a $C^\infty$ endomorphism of the vector bundle $N$ in \eqref{e3} such that
\begin{itemize}
\item $J^2\,=\, - {\rm Id}_N$, and

\item $J$ is flat with respect to the partial connection $D^N$ in \eqref{e4}; in other words,
\begin{equation}\label{j}
D^N\circ J \ =\ (J\otimes {\rm Id}_{{\mathcal F}^*})\circ D^N
\end{equation}
(equivalently, $J$ preserves $D^N$).
\end{itemize}
This endomorphism $J$ will be called a \textit{transversely almost complex structure}.

We will describe a natural identification between the transversely complex structures and
the transversely almost complex structures.

Take a transversely almost complex structure $J$ on $M$. Let $\sigma$ be a $C^\infty$ submersion
of a simply connected open subset $U\, \subset\, M$ to a simply connected open subset $D$ of $S^2\,=\, {\mathbb
R}^2 \cup \{\infty\}$ satisfying the condition that the subbundle ${\mathcal F}\big\vert_{U}
\,\subset\, T^{\mathbb R} U_i$ is the kernel of the differential map $d\sigma\, :\, TU \,
\longrightarrow \, \sigma^* TD$ of $\sigma$. So the differential $d\sigma\, :\, T^{\mathbb R}M\,
\longrightarrow\, \sigma^* T^{\mathbb R} D$ identifies $N$ in \eqref{e3} with $\sigma^*
T^{\mathbb R} D$. Then there is a unique almost complex structure $J_D\, :\, T^{\mathbb R} D
\, \longrightarrow\, T^{\mathbb R} D$ on $D$ such that
$J\big\vert_U\,=\, \sigma^* J_D$ in terms of the above identification of $N$ with $\sigma^*T^{\mathbb R} D$. Indeed, this follows immediately from \eqref{j}. Recall that, by a result of Korn and Lichtenstein almost complex structures on surfaces are integrable (see,
 for instance, \cite[Chapter 1]{StG} and references therein). Identify the Riemann surface
$(D,\, J_D)$ with an open subset $D'$ of ${\mathbb C}{\mathbb P}^1$ (for this identification one can use the uniformization theorem of Riemann surfaces \cite{StG}). Let $\sigma'\, :
\, U\, \longrightarrow\, D'$ be the map given by the composition of $\sigma$
with the above identification of $D$ with $D'$. All such
pairs $(U,\, \sigma')$ together define a transversely complex structure on entire $M$.

Conversely, given a transversely complex structure on $M$ defined by $\{(U_i,\, \sigma_i)\}_{i\in I}$,
identify $N\big\vert_{U_i}$ with $\sigma^*_i T^{\mathbb R} {\mathbb C}{\mathbb P}^1$ using the
differential $d\sigma_i$ of the map $\sigma_i$. Using this identification, the almost complex structure
of ${\mathbb C}{\mathbb P}^1$ pulls back --- by $\sigma_i$ --- to
define a transversely almost complex structure on $U_i$. The condition in \eqref{e6} ensures
that these locally defined transversely almost complex structures patch together
compatibly. Consequently, we obtain a transversely almost complex structure on entire $M$.

The notation $\widehat{\mathcal F}$ will be used for a foliation $\mathcal F$ on $M$
equipped with a transversely complex structure.

\begin{definition}\label{def2}
A {\it transversely projective structure} on $\mathcal F$ is defined by giving data $\{(U_i,\,
\sigma_i)\}_{i\in I}$ as in Definition \ref{def1} satisfying the extra condition (apart from the earlier
three conditions for transversely complex structures) that the holomorphic maps $f^c_{i,j}$ in condition
\eqref{e6} are of the form $z \,\longmapsto\, (az+b)/(cz+d)$, where $a,\,b,\,c,\,d\,\in\, {\mathbb C}$
with $ad-bc \,=\,1$, i.e., each $f^c_{i,j}$ is the restriction of some M\"obius transformation. As
before, two such data $\{(U_i,\, \sigma_i)\}_{i\in I}$ and $\{(U_j,\, \sigma_j)\}_{j\in J}$ are
called {\it equivalent} if their union $\{(U_i,\, \sigma_i)\}_{i\in I\cup J}$ also satisfies all the
conditions for a transversely projective structure.

A {\it transversely projective} structure on $\mathcal F$ is
an equivalence class of such data.
\end{definition}

Clearly, a transversely projective structure on $\mathcal F$ defines a transversely complex 
structure on $\mathcal F$. If $\widehat{\mathcal F}$ is a transversely complex structure on 
$\mathcal F$, then a transversely projective structure on $\widehat{\mathcal F}$ is a 
transversely projective structure $\mathcal P$ on $\mathcal F$ such that the transversely 
complex structure defined by $\mathcal P$ coincides with the transversely complex structure 
$\widehat{\mathcal F}$.

\section{Holomorphic vector bundles and projective connections}\label{se3}

\subsection{Transversely holomorphic structure}\label{se3.1}

Let $\widehat{\mathcal F}$ be a transversely complex structure on the foliation
$\mathcal F$ on $M$. Denote by $J$ the transversely almost complex structure on
$N$ corresponding to $\widehat{\mathcal F}$ (see \eqref{ej}). Using $J$, we
get a decomposition of the complexified normal bundle in \eqref{e3}
\begin{equation}\label{ej2}
N\otimes_{\mathbb R} {\mathbb C}\ =\ N^{1,0}\oplus N^{0,1};
\end{equation}
more precisely, $N^{1,0}$ (respectively,
$N^{0,1}$) is the eigenbundle, for the eigenvalue $\sqrt{-1}$ (respectively, $-\sqrt{-1}$), of the
$\mathbb C$--linear
automorphism $J\otimes_{\mathbb R} {\rm Id}_{\mathbb C}$ of $N\otimes_{\mathbb R} {\mathbb C}$.
Since $J$ is flat with respect the partial connection $D^N$ on $N$ (see \eqref{j}), it follows
immediately that the decomposition in \eqref{ej2} is preserved by the flat
partial connection on $N\otimes_{\mathbb R} {\mathbb C}$ induced by $D^N$. Let
\begin{equation}\label{e7}
{}^N\nabla^{0,1} \ : \ N^{0,1} \ \longrightarrow\, N^{0,1}\otimes {\mathcal F}^*
 \ \ \text{ and }\ \ {}^N\nabla^{1,0} \ : \ N^{1,0} \ \longrightarrow\, N^{1,0}\otimes {\mathcal F}^*
\end{equation}
be the flat partial connections on $N^{0,1}$ and $N^{1,0}$ respectively given by the above mentioned 
flat partial connection on $N\otimes_{\mathbb R} {\mathbb C}$ (which is induced
by $D^N$). If $D$ is a partial connection on a complex vector
bundle $V$ on $M$, then
\begin{equation}\label{e8}
{\mathcal D}\ :=\ D\circ {\rm Id}_{(N^{0,1})^*}+ {\rm Id}_V\otimes \left({}^N\nabla^{0,1}\right)^*
\end{equation}
is a partial connection on $V\otimes \left(N^{0,1}\right)^*$, where 
$\left({}^N\nabla^{0,1}\right)^*$ denotes the partial connection on the dual vector bundle 
$\left(N^{0,1}\right)^*$ induced by the partial connection ${}^N\nabla^{0,1}$ on 
$N^{0,1}$ in \eqref{e7}. The curvature ${\mathcal K}({\mathcal D})$ of the connection
$\mathcal D$ in \eqref{e8} is ${\mathcal K}(D)\otimes {\rm Id}_{(N^{0,1})^*}+
{\rm Id}_V\otimes {\mathcal K}(\left({}^N\nabla^{0,1}\right)^*)$, where
${\mathcal K}(D)$ (respectively, ${\mathcal K}(\left({}^N\nabla^{0,1}\right)^*)$) is
the curvature of $D$ (respectively, $\left({}^N\nabla^{0,1}\right)^*$). Note that
${}^N\nabla^{0,1}$ is flat because $D^N$ is flat, and hence the dual connection
$\left({}^N\nabla^{0,1}\right)^*$ is also flat. Thus if the partial connection $D$ on $V$ is
flat, then the partial connection $\mathcal D$ in \eqref{e8} is also flat.

A \textit{transversely complex vector bundle} on ${\mathcal F}$ is a pair of the form $(V,\, D)$, where $V$
is a $C^\infty$ complex vector bundle on $M$ and $D$ is a flat partial connection on $V$ along $\mathcal F$.
Given transversely complex vector bundles $(V,\, D)$ and $(W,\, D')$, a homomorphism 
$(V,\, D)\, \longrightarrow\, (W,\, D')$ between them is a $C^\infty$ homomorphism of vector bundles
$\rho\,:\, V\, \longrightarrow\, W$ such that $D'\circ\rho\,=\, (\rho\otimes{\rm Id}_{{\mathcal F}^*})
\circ D$ (in other words, the homomorphism $\rho$ intertwines the partial connections $D$ and $D'$).

In order to define transversely holomorphic vector bundles on $\widehat{\mathcal F}$, a certain complex
foliation on $M$ needs to be constructed.

Consider the complexified tangent bundle $T^{\mathbb C}M \, :=\,
T^{\mathbb R}M\bigotimes_{\mathbb R}\mathbb C$. The complexification of the quotient map
$$
q\ :\ T^{\mathbb R}M \ \longrightarrow\ T^{\mathbb R}M/{\mathcal F}\ =:\ N
$$
(see \eqref{e3}) is denoted by
\begin{equation}\label{d2}
q_{\mathbb C}\ :\ T^{\mathbb C}M \ \longrightarrow\ N \otimes_{\mathbb R}\mathbb C.
\end{equation}
Composing it with the natural projection $q^{1,0}\, :\, N \otimes_{\mathbb R}\mathbb C\,
\longrightarrow\, N^{1,0}$
(corresponding to the decomposition in \eqref{ej2}) we get a homomorphism of complex
vector bundles
\begin{equation}\label{e8b}
\widetilde{q} \ :=\ q^{1,0}\circ q_{\mathbb C}\ :\ T^{\mathbb C}M \ \longrightarrow\ N^{1,0}.
\end{equation}
Note that $\widetilde{q}$ is surjective because both $q_{\mathbb C}$ and $q^{1,0}$ are
surjective. Let
\begin{equation}\label{e8c}
{\mathbb F} \ :=\ \text{kernel}(\widetilde{q}) \ \subset\ T^{\mathbb C}M
\end{equation}
be the complex subbundle of $T^{\mathbb C}M$ given by the kernel of $\widetilde{q}$ in \eqref{e8b}. From
the given conditions that the foliation $\mathcal F$ is integrable, and $J$ is flat with respect
to $D^N$ (see \eqref{j}), it follows immediately that ${\mathbb F}$ is
closed under the Lie bracket operation on the sheaf of $C^\infty$
sections of $T^{\mathbb C}M$ (in other words, ${\mathbb F}$ is also integrable).

Note that ${\mathcal F}\, \subset\, \mathbb F$; in fact, we have
\begin{equation}\label{ei}
{\mathcal F}\otimes_{\mathbb R}
{\mathbb C}\ \, \subset\ \, \mathbb F,
\end{equation}
and
\begin{equation}\label{ei2}
{\mathbb F}/({\mathcal F}\otimes_{\mathbb R}{\mathbb C})\ =\ (N\otimes_{\mathbb R}{\mathbb C})/N^{1,0}
\ =\ N^{0,1}.
\end{equation}

Fix a transversely complex structure on ${\mathcal F}$, and denote the resulting 
transversely complex foliation by $\widehat{\mathcal F}$. Let $(V,\, D)$ be a 
transversely complex vector bundle on ${\mathcal F}$. A \textit{transversely 
holomorphic} structure on $(V,\, D)$ is a partial flat connection
\begin{equation}\label{ed}
\mathbb D\ :\ V \ \longrightarrow\ V\otimes {\mathbb F}^*
\end{equation}
on $V$ 
along $\mathbb F$ (see \eqref{e8c}) such that the restriction of $\mathbb D$ to the 
subbundle ${\mathcal F}\, \subset\, \mathbb F$ (see \eqref{ei}) coincides with $D$. A 
\textit{transversely holomorphic} vector bundle on $\widehat{\mathcal F}$ is a 
transversely complex vector bundle on ${\mathcal F}$ equipped with a transversely 
holomorphic structure.

For transversely holomorphic vector bundles $(V_1,\, {\mathbb D}_1)$ and $(V_2,\, {\mathbb D}_2)$, a 
transversely holomorphic homomorphism $(V_1,\,{\mathbb D}_1)\, \longrightarrow\, (V_2,\,{\mathbb D}_2)$ 
between them is a $C^\infty$ homomorphism $\rho\, :\, V_1\, \longrightarrow\, V_2$ of vector bundles such 
that ${\mathbb D}_2\circ\rho\,=\, (\rho\otimes{\rm Id}_{{\mathbb F}^*})\circ{\mathbb D}_1$ (in other
words, the homomorphism $\rho$ intertwines the partial connections ${\mathbb D}_1$ and ${\mathbb D}_2$).

Let ${\mathcal C}({\mathcal F})$ denote the sheaf on $M$ given by the locally defined $C^\infty$ complex
valued functions that are locally constant along the leaves of $\mathcal F$. In other words, 
${\mathcal C}({\mathcal F})$ is the kernel of
$$
d_{\mathcal F} \ :\ C^\infty(M) \ \longrightarrow\ {\mathcal F}^*
$$
(set $i\,=\, 0$ in \eqref{d}). Take a transversely complex vector bundle $(V,\, D)$ on $M$ for ${\mathcal F}$.
Let $\mathbb V$ denote the sheaf on $M$ given by the locally defined integrable (same as flat) sections
of $V$ for the partial connection $D$. Similarly, $({\mathbb N}^{0,1})^*$ denotes the sheaf on $M$ given
by the locally defined flat sections of the dual vector bundle $(N^{0,1})^*$ (see \eqref{ej2}) for the
partial connection $({}^N\nabla^{0,1})^*$ on $(N^{0,1})^*$ in \eqref{e8}. Therefore, the tensor product
${\mathbb V}\bigotimes_{{\mathcal C}({\mathcal F})} ({\mathbb N}^{0,1})^*$ is the sheaf on $M$ given by the
locally defined flat sections of $V\otimes (N^{0,1})^*$ for the partial connection
${\mathcal D}$ on $V\otimes (N^{0,1})^*$ constructed in \eqref{e8}.

We will show that giving a transversely holomorphic structure on $(V,\, D)$ is equivalent to giving
a first order differential operator
\begin{equation}\label{e8d}
\overline{\partial}_V \ :\ {\mathbb V} \ \longrightarrow\ {\mathbb V} \otimes_{{\mathcal C}({\mathcal F})}
({\mathbb N}^{0,1})^*
\end{equation}
that satisfies the Leibniz identity; we note that the Leibniz identity says
that for any locally defined section $s$
of $\mathbb V$ and any locally defined complex function $f$ lying in ${\mathcal C}({\mathcal F})$,
$$
\overline{\partial}_V (fs) \ =\ f\cdot \overline{\partial}_V (s)+ s\otimes \widehat{q}(df),
$$
where $\widehat{q}\, :\, (N\otimes_{\mathbb R} {\mathbb C})^*\, =\, (N^{1,0})^*\oplus 
(N^{0,1})^* \, \longrightarrow\, (N^{0,1})^*$ is the natural projection (see 
\eqref{ej2}); note that the complex $1$-form $df$ lies in the image $(N\otimes_{\mathbb 
R} {\mathbb C})^*\, \hookrightarrow\, (T^{\mathbb C}M)^*$ (this inclusion map is the 
dual of the projection $q_{\mathbb C}$ in \eqref{d2}) because $f$ is locally constant 
along the leaves of $\mathcal F$.

To see the equivalence between the above two descriptions of a transversely holomorphic 
structure on $(V,\, D)$, let $\mathbb D$ be a transversely holomorphic structure on 
$(V,\, D)$ (as in \eqref{ed}). So $\mathbb D$ is a partial flat connection on $V$ along 
$\mathbb F$ (see \eqref{e8c}) such that the restriction of $\mathbb D$ to the subbundle 
${\mathcal F}\, \subset\, \mathbb F$ coincides with $D$. Take a $C^\infty$ section $s\, 
\in\, \Gamma(U,\, V)$ defined over an open subset $U\, \subset\, M$ such that $s$ is 
flat with respect to $D$. Consider ${\mathbb D}(s)\, \in\, \Gamma(U,\, V\otimes {\mathbb 
F}^*)$. The given condition that the restriction of $\mathbb D$ to the subbundle 
${\mathcal F}\, \subset\, \mathbb F$ coincides with $D$ implies that the image of 
${\mathbb D}(s)$ under the natural $\mathbb R$--linear projection
$$
V\otimes {\mathbb F}^* \ \longrightarrow\ V\otimes
({\mathcal F}\otimes_{\mathbb R}{\mathbb C})^*
$$
(this is the dual of the inclusion map in \eqref{ei}) vanishes identically; indeed, recall
that $s$ is flat with respect to $D$. Consequently, we have
\begin{equation}\label{ds}
{\mathbb D}(s)\, \in\, \Gamma(U,\, {\mathbb V} \otimes_{{\mathcal C}
({\mathcal F})} ({\mathbb N}^{0,1})^*)
\end{equation}
(see \eqref{ei2}). The map $s\, \longmapsto\, {\mathbb D}(s)$, where ${\mathbb
D}(s)$ is constructed in \eqref{ds}, clearly satisfies the Leibniz identity.

Conversely, given a differential operator
$$
\overline{\partial}_V \ :\ {\mathbb V} \ \longrightarrow\ 
{\mathbb V} \otimes_{{\mathcal C}({\mathcal F})}({\mathbb N}^{0,1})^*
$$
as in \eqref{e8d} that satisfies the Leibniz identity, consider the homomorphism
${\mathbb V}\, \longrightarrow\, V\otimes {\mathbb F}^*$ obtained by composing $\overline{\partial}_V$
with the natural inclusion map
$$
{\mathbb V} \otimes_{{\mathcal C}({\mathcal F})}({\mathbb N}^{0,1})^*\ \hookrightarrow\,
V\otimes {\mathbb F}^*.
$$
It is straightforward to check that this homomorphism ${\mathbb V}\, \longrightarrow\, V\otimes {\mathbb F}^*$
actually extends uniquely to a flat partial connection $V\, \longrightarrow\, V\otimes {\mathbb F}^*$ which
in fact defines a transversely holomorphic structure on $(V,\, D)$.

The complex line bundle $N^{1,0}$ in \eqref{ej2} equipped with the partial connection ${}^N\nabla^{1,0}$ (see
\eqref{e7}) has a natural transversely holomorphic structure given by the Lie bracket of vector fields on $M$
and the projection of vector fields to $N^{1,0}$.

To give further examples of transversely holomorphic vector bundles, let $W$ be a complex vector
bundle on $M$ equipped with a flat connection $\nabla\, :\, W\, \longrightarrow\, W\otimes
(T^{\mathbb R}M)^*$. Consider the differential operator
$$\nabla^{\mathcal F}\ :\ W\ \longrightarrow\ W\otimes {\mathcal F}^*$$
obtained by composing $\nabla$ with $\text{Id}_W\otimes \phi$, where $\phi$ is the projection
in \eqref{e1}. The pair $(W,\, {\nabla}^{\mathcal F})$ is a transversely complex vector bundle
on $(M,\, {\mathcal F})$.

Let $\iota\, :\, (T^{\mathbb R}M)^*\, \hookrightarrow\, (T^{\mathbb R}M)^*\otimes_{\mathbb R}
{\mathbb C}\,=\, (T^{\mathbb C}M)^*$ be the natural inclusion map defined by $w\, \longmapsto\, w\otimes 1$.
Consider the homomorphism
\begin{equation}\label{cc}
\nabla^{\mathbb C}\ :=\ ({\rm Id}_W\otimes\iota)\circ \nabla \ :\ W\ \longrightarrow\
W\otimes (T^{\mathbb C}M)^*.
\end{equation}
Using the natural projection $(T^{\mathbb C}M)^*\, \longrightarrow\, {\mathbb F}^*$ (it is the
dual of the inclusion map in \eqref{e8c}), the homomorphism $\nabla^{\mathbb C}$ in \eqref{cc}
produces a homomorphism
$$
\nabla^{\mathbb F}\ :\ W\ \longrightarrow\ W\otimes {\mathbb F}^*.
$$
It is straightforward to check that $(W,\, \nabla^{\mathbb F})$ is a transversely holomorphic
structure on the above defined complex vector bundle $(W,\, {\nabla}^{\mathcal F})$.

Given two transversely holomorphic vector bundles $(V,\, D,\, \overline{\partial}_V)$ 
and $(W,\, D',\, \overline{\partial}_W)$, a transversely holomorphic homomorphism $$(V,\ 
D,\ \overline{\partial}_V)\,\ \longrightarrow \,\ (W,\ D',\ \overline{\partial}_W)$$ 
between them is a homomorphism $\rho\, :\, (V,\, D)\, \longrightarrow\, (W,\, D')$ of 
transversely complex vector bundles such that $\rho$ intertwines $\overline{\partial}_V$ 
and $\overline{\partial}_W$, which means that
$$
\overline{\partial}_W\circ\rho\ =\ (\rho\otimes{\rm Id}_{({\mathbb N}^{0,1})^*}) \circ\overline{\partial}_V;
$$
note that $\rho$ produces a homomorphism ${\mathbb V}\, \longrightarrow\, {\mathbb W}$, 
which is also denoted by $\rho$; here $\mathbb W$ denotes the sheaf of flat sections of
$W$ for the partial connection $D'$.

A transversely holomorphic section of a transversely holomorphic vector bundle $(V,\, 
D,\, \overline{\partial}_V)$ is a transversely holomorphic homomorphism to it from the 
trivial transversely holomorphic line bundle $(M\times{\mathbb C},\, d)$, where $d$ is 
the trivial connection on the trivial line bundle given by the de Rham differential (as 
shown above, a flat vector bundle gives a transversely holomorphic vector bundle).

\subsection{Transversely holomorphic projective bundle}\label{se3.2}

Fix an integer $r\, \geq\, 2$.
Take a $C^\infty$ principal $\text{PGL}(r,{\mathbb C})$--bundle
\begin{equation}\label{dp0}
p\ :\ P\ \longrightarrow\ M
\end{equation}
on the manifold $M$ with foliation $\mathcal F$. Let
\begin{equation}\label{dp}
dp\ :\ T^{\mathbb R}P\ \longrightarrow\ p^*T^{\mathbb R}M
\end{equation}
be the differential of the projection $p$ in \eqref{dp0}. A {\it partial integrable
connection on} $P$ {\it in the direction of} $\mathcal F$ is an integrable subbundle
\begin{equation}\label{e33}
{\mathcal F}^P\ \, \subset\ \, T^{\mathbb R}P
\end{equation}
satisfying the following two conditions:
\begin{enumerate}
\item The action of any $A\, \in\, \text{PGL}(r,{\mathbb C})$ on $P$ sends ${\mathcal 
F}^P$ to itself, and

\item the restriction of the differential $dp$ (see \eqref{dp}) to ${\mathcal F}^P\, 
\subset\, T^{\mathbb R}P$ is an isomorphism of ${\mathcal F}^P$ with $p^*{\mathcal F}\, 
\subset\, p^*T^{\mathbb R}M$.
\end{enumerate}

\begin{definition}\label{def0}
A {\it transversely principal} $\text{PGL}(r,{\mathbb C})$--bundle on $M$ is a $C^\infty$ principal
$\text{PGL}(r, {\mathbb C})$--bundle on $M$ equipped with a partial integrable connection in
the direction of the foliation $\mathcal F$.
\end{definition}

Let $(P,\, {\mathcal F}^P)$ be a transversely principal $\text{PGL}(r,{\mathbb C})$--bundle on $M$. Let
$$dp\otimes {\mathbb C}\ :\ T^{\mathbb C}P\ :=\ T^{\mathbb R}P\otimes{\mathbb C} 
\ \longrightarrow\ (p^*T^{\mathbb R}M)\otimes {\mathbb C} \ :=\ p^*T^{\mathbb C}M$$
be the complexification of the differential $dp$ in \eqref{dp}.
A transversely holomorphic structure on $(P,\, {\mathcal F}^P)$ is a $C^\infty$
integrable distribution
\begin{equation}\label{hs}
{\mathbb F}^P\ \, \subset\ \, T^{\mathbb C}P
\end{equation}
satisfying the following three conditions:
\begin{enumerate}
\item The action of any $A\, \in\, \text{PGL}(r,{\mathbb C})$ on $P$ sends ${\mathbb F}^P$ to itself,

\item ${\mathcal F}^P\ \, \subset\ \, {\mathbb F}^P$, and

\item the restriction of $dp\otimes {\mathbb C}$ to ${\mathbb F}^P$ is an isomorphism of
${\mathbb F}^P$ with $p^*\mathbb F$ (see \eqref{e8c}).
\end{enumerate}

Consider the action of $\text{PGL}(r,{\mathbb C})$ on ${\mathbb C}{\mathbb P}^{r-1}$ 
given by the standard action of $\text{SL}(r,{\mathbb C})$ on ${\mathbb C}^r$. In fact, 
the homomorphism \begin{equation}\label{eta} \eta\ :\ \text{PGL}(r,{\mathbb C})\ 
\longrightarrow\ \text{Aut}({\mathbb C}{\mathbb P}^{r-1}), \end{equation} where 
$\text{Aut}({\mathbb C}{\mathbb P}^{r-1})$ is the group of all holomorphic automorphisms 
of ${\mathbb C}{\mathbb P}^{r-1}$, given by this action of $\text{PGL}(r,{\mathbb C})$ 
on ${\mathbb C}{\mathbb P}^{r-1}$, is actually an isomorphism. Given a principal 
$\text{PGL}(r,{\mathbb C})$--bundle $p\, :\, P\, \longrightarrow\, M$ as in
\eqref{dp0}, we have the associated fiber bundle
\begin{equation}\label{eta2}
p^\eta\ :\ P^\eta\ :=\ P\times^\eta {\mathbb C}{\mathbb P}^{r-1}\ \longrightarrow\ M
\end{equation}
whose typical fiber is ${\mathbb C}{\mathbb P}^{r-1}$. We recall that $P^\eta$
is the quotient of $P\times {\mathbb C}{\mathbb P}^{r-1}$ where two points $(z_1,\, v_1),\, (z_2,\, v_2)\, \in\, 
P\times {\mathbb C}{\mathbb P}^{r-1}$ are identified if there is an element $g\, \in\, \text{PGL}(r,{\mathbb C})$
such that $z_2\,=\, z_1g$ and $v_2\,=\, \eta(g^{-1})(v_1)$, where $\eta$ is the homomorphism in
\eqref{eta}. Note that each fiber of $P^\eta$ is complex manifold which
is biholomorphic to ${\mathbb C}{\mathbb P}^{r-1}$.

Let $(P,\, {\mathcal F}^P)$ be a transversely principal $\text{PGL}(r,{\mathbb 
C})$--bundle on $M$. Then ${\mathcal F}^P$ induces an integrable partial connection, in 
the direction of $\mathcal F$, on any fiber bundle associated to $P$. In particular, 
${\mathcal F}^P$ induces an integrable partial connection on the associated fiber bundle 
$P^\eta$ in \eqref{eta2}; let $\widetilde{\mathcal F}^P\, \subset\, T^{\mathbb R} 
P^\eta$ be the integrable distribution on $P^\eta$ defining this partial connection 
given by ${\mathcal F}^P$. This partial connection $\widetilde{\mathcal F}^P$ on 
$P^\eta$ has the following property. Take any $C^\infty$ section $v$ of $\mathcal F$ 
defined on an open subset $U\, \subset\, M$; let $\widetilde v\, \in\, 
C^\infty(P^\eta\big\vert_U,\, \widetilde{\mathcal F}^P)$ be the unique horizontal lift 
of $v$ on $(p^\eta)^{-1}(U)\,=\, P^\eta\big\vert_U$, where $p^\eta$ is
the projection in \eqref{eta2}. Take any $C^\infty$ vertical vector field $w$ on
$P^\eta\big\vert_U$ for the projection $p^\eta$. Then the following holds:
\begin{equation}\label{crc}
[{\widetilde v},\, J^P(w)]\ =\ J^P([{\widetilde v},\, w]),
\end{equation}
where $J^P$ is the almost complex structure on the fibers of the ${\mathbb C}{\mathbb P}^1$--bundle $P^\eta$
(recall that every fiber of $P^\eta$ is a complex manifold biholomorphic to ${\mathbb C}{\mathbb P}^{r-1}$);
note that $[{\widetilde v},\, w]$ is a vertical vector field on $P^\eta$, because
$$dp^\eta([{\widetilde v},\, w])\ =\ [dp^\eta({\widetilde v}),\, dp^\eta(w)]\ =\ 0 ,$$
where $p^\eta$ is the projection in \eqref{eta2}, because $dp^\eta(w)\,=\, 0$ (recall
that $w$ is a vertical vector field on $P^\eta\big\vert_U$).

It is straightforward to check that an integrable partial connection, in the direction of $\mathcal F$,
on the associated projective bundle $P^\eta$ arises from some integrable subbundle
${\mathcal F}^P\, \subset\, T^{\mathbb R}P$ --- such that $(P,\, {\mathcal F}^P)$ is a 
transversely principal $\text{PGL}(r,{\mathbb C})$--bundle --- if and only if the integrable partial
connection on $P^\eta$ satisfies the condition in \eqref{crc}.

A transversely holomorphic structure ${\mathbb F}^P$ on a transversely principal $\text{PGL}(r,{\mathbb
C})$--bundle $(P,\, {\mathcal F}^P)$ (as in \eqref{hs}) produces an integrable partial
connection on
$P^\eta$ (see \eqref{eta2}) in the direction of $\mathbb F$ (constructed in \eqref{e8c});
let $\widetilde{\mathbb F}^P\, \subset\, T^{\mathbb C} P^\eta$ be the integrable distribution
on $P^\eta$ defining this partial connection in the direction of
${\mathbb F}$ given by ${\mathbb F}^P$. This partial connection
$\widetilde{\mathbb F}^P$ on $P^\eta$ has the following property. Take any $C^\infty$ section $v$
of $\mathbb F$ defined on an open subset $U\, \subset\, M$; let $\widetilde v\, \in\,
C^\infty(P^\eta\big\vert_U,\, \widetilde{\mathbb F}^P)$ be the unique horizontal lift of $v$. Take any
$C^\infty$ vertical vector field $w$ on $P^\eta\big\vert_U$. Then the following holds:
\begin{equation}\label{crc2}
[{\widetilde v},\, J^P(w)]\ =\ J^P([{\widetilde v},\, w]),
\end{equation}
where $J^P$ is as in \eqref{crc}; note that the vector field $[{\widetilde v},\, w]$ is vertical.

An integrable partial connection, in the direction of $\mathbb F$,
on $P^\eta$ arises from some transversely holomorphic structure
${\mathbb F}^P\, \subset\, T^{\mathbb C}P$ on $(P,\, {\mathcal F}^P)$
if and only if the integrable partial connection on $P^\eta$
satisfies the condition in \eqref{crc2}.

\section{Projective structure and projective connection}\label{se4}

Let
\begin{equation}\label{p1}
p_1\ :\ {\mathcal P}_0\ :=\ {\mathbb C}{\mathbb P}^1\times{\mathbb C}{\mathbb P}^1\ \longrightarrow\
{\mathbb C}{\mathbb P}^1
\end{equation}
be the projection to the first factor. So ${\mathcal P}_0$ is the trivial ${\mathbb C}{\mathbb P}^1$--bundle
on ${\mathbb C}{\mathbb P}^1$. The trivial connection on this trivial bundle ${\mathcal P}_0$ will be
denoted by
\begin{equation}\label{p2}
\mathbf{\nabla}_0.
\end{equation}

Let ${\mathcal P}$ be a transversely projective structure on the transversely complex foliation
$(M,\, {\mathcal F},\, \widehat{\mathcal F})$. Fix data $\{(U_i,\, \sigma_i)\}_{i\in I}$ giving
$\mathcal P$. More precisely, the collection $\{(U_i,\, \sigma_i)\}_{i\in I}$ is as in \eqref{e5} such that
\begin{itemize}

\item all $U_i\, \subset\, M$, $i\,\in\, I$, are simply connected open subsets, and
$\bigcup_{i\in I} U_i\,=\, M$,

\item{} each $\sigma_i$ is a $C^\infty$ submersion of $U_i$ to an open subset $D_i$ of ${\mathbb C}
{\mathbb P}^1$ satisfying the condition that the subbundle ${\mathcal F}\big\vert_{U_i} \, \subset\,
T^{\mathbb R} U_i$ coincides with the kernel of the differential map $d\sigma_i\, :\, TU_i \,
\longrightarrow \, \sigma^*_i TD_i$ for the submersion $\sigma_i$,

\item for each $i\, \in\, I$, the pullback, using the differential $d\sigma_i$, of the almost complex
structure on $\sigma_i(U_i)\, \subset\, {\mathbb C}{\mathbb P}^1$ coincides with the restriction, to
$U_i$, of the almost complex structure on the normal bundle $N$ given by $\widehat{\mathcal F}$, and

\item the holomorphic maps $f^c_{i,j}$ in condition \eqref{e6} are
of the form $z \,\longmapsto\, (az+b)/(cz+d)$, where $a,\,b,\,c,\,d\, \in\, \mathbb C$
with $ad-bc \,=\,1$.
\end{itemize}

For each $i\, \in\, I$, consider the pulled back ${\mathbb C}{\mathbb P}^1$--bundle
$\sigma^*_i {\mathcal P}_0\, \longrightarrow\, U_i$ (see \eqref{p1}) equipped with the pulled back flat 
connection $\sigma^*_i\mathbf{\nabla}_0$ (see \eqref{p2}). For any ordered pair $i,\, j\, \in\, I$
with $U_i\bigcap U_j\,\not=\, \emptyset$, and any connected component
$U^c_{i,j}\, \subset\, U_i\cap U_j$, over the open subset $U^c_{i,j}$ identify 
the ${\mathbb C}{\mathbb P}^1$--bundle $\sigma^*_i {\mathcal P}_0$ with the ${\mathbb C}
{\mathbb P}^1$--bundle $\sigma^*_j {\mathcal P}_0$ using the map
\begin{equation}\label{e10}
f^c_{i,j}\times f^c_{i,j} \ :\ (\sigma^*_i {\mathcal P}_0)\big\vert_{U^c_{i,j}}\ =\
U^c_{i,j}\times {\mathbb C}{\mathbb P}^1 \ \longrightarrow\
U^c_{i,j}\times {\mathbb C}{\mathbb P}^1 \ =\ (\sigma^*_j {\mathcal P}_0)\big\vert_{U^c_{i,j}}
\end{equation}
(see \eqref{e6}); note that $f^c_{i,j}\, \in\, \text{PGL}(2,{\mathbb C})$ and hence it gives
an automorphism of ${\mathbb C}{\mathbb P}^1$. The isomorphism of ${\mathbb C}{\mathbb P}^1$--bundles
in \eqref{e10} takes the flat connection $\sigma^*_i\mathbf{\nabla}_0$ to the
flat connection $\sigma^*_j\mathbf{\nabla}_0$. For all ordered pair $i,\, j\, \,\in\, I$, let
$$
f_{i,j}\ :\ U_i\cap U_j\ \longrightarrow\ \text{PGL}(2, {\mathbb C})\ =\ \text{Aut}({\mathbb C}
{\mathbb P}^1)
$$
be the locally constant function that sends any connected component $U^c_{i,j}\, \subset\, U_i\cap U_j$
to $f^c_{i,j}$. Since this collection $\{f_{i,j}\}_{i,j\in I}$ satisfies
the cocycle condition, the locally defined flat ${\mathbb C}{\mathbb P}^1$--bundles
$\{(\sigma^*_i {\mathcal P}_0,\, \sigma^*_i\mathbf{\nabla}_0)\}_{i\in I}$ patch together compatibly ---
using \eqref{e10} --- to define a ${\mathbb C}{\mathbb P}^1$--bundle
\begin{equation}\label{e11}
\varphi\ :\ {\mathbb P}_{\mathcal P} \ \longrightarrow\ M
\end{equation}
equipped with a flat connection
\begin{equation}\label{e12}
\nabla^{\mathcal P}.
\end{equation}
Note that the pair $({\mathbb P}_{\mathcal P},\, \nabla^{\mathcal P})$ in \eqref{e11} and
\eqref{e12} gives a ${\mathbb C}{\mathbb P}^1$--bundle equipped with a transversely holomorphic
structure on the foliated manifold $(M,\, \widehat{\mathcal F})$.

Consider the holomorphic section of the ${\mathbb C}{\mathbb P}^1$--bundle in \eqref{p1}
\begin{equation}\label{e12b}
s_0\ :\ {\mathbb C}{\mathbb P}^1\ \longrightarrow\ {\mathcal P}_0,\ \ \,
z\ \longmapsto\ (z,\,z)
\end{equation}
whose image is the diagonal in ${\mathbb C}{\mathbb P}^1\times{\mathbb C}{\mathbb P}^1$.
For each $i\, \in\, I$, consider the pulled back section $$\sigma^*_i s_0 \ :\ U_i\
\longrightarrow\ \sigma^*_i {\mathcal P}_0\ =\ {\mathbb P}_{\mathcal P}\big\vert_{U_i}$$
(see \eqref{e12b} and \eqref{e11}). These locally defined sections $\sigma^*_i s_0$ of ${\mathbb P}_{\mathcal P}$
patch together compatibly to define a global section
\begin{equation}\label{e13}
s_{\mathcal P}\ :\ M \ \longrightarrow\ {\mathbb P}_{\mathcal P}
\end{equation}
of the projection $\varphi$ in \eqref{e11}. From the fact that the section $s_0$ in \eqref{e12b} is holomorphic
it follows immediately that the section $s_{\mathcal P}$ in \eqref{e13} is transversely holomorphic.

Let $T_{\varphi}\ \subset\ T^{\mathbb R}{\mathbb P}_{\mathcal P}$ be the relative tangent bundle
for the projection $\varphi$ in \eqref{e11}; in other words, $T_{\varphi}$ is the kernel of the
differential $d\varphi\, :\, T^{\mathbb R}{\mathbb P}_{\mathcal P}\, \longrightarrow\,
\varphi^* T^{\mathbb R}M$.

Let $ds_{\mathcal P}\,:\, TM\, \longrightarrow\, s^*_{\mathcal P} T^{\mathbb R}{\mathbb P}_{\mathcal P}$
be the differential of the map $s_{\mathcal P}$ in \eqref{e13}. Let
\begin{equation}\label{e14}
{\mathcal S}_{\mathcal P}\ :\ TM\ \longrightarrow\ s^*_{\mathcal P} T_{\varphi}
\end{equation}
be the map that sends any $v\,\in\, T_mM$ to $ds_{\mathcal P}(v) -\widetilde{v}$, where $\widetilde{v}\, \in\,
T^{\mathbb R}_{s_{\mathcal P}(m)} {\mathbb P}_{\mathcal P}$ is the horizontal lift of $v$ for the
connection $\nabla^{\mathcal P}$ in \eqref{e12}. It can be shown that
\begin{equation}\label{e15}
\text{kernel}({\mathcal S}_{\mathcal P})\,\ =\,\ {\mathcal F},
\end{equation}
where ${\mathcal S}_{\mathcal P}$ is the homomorphism in \eqref{e14}.
Indeed, from the constructions of $s_{\mathcal P}$ and $\nabla^{\mathcal P}$ it follows immediately that
${\mathcal F}\, \subset\, \text{kernel}({\mathcal S}_{\mathcal P})$. On the other hand, from
the fact that the diagonal in ${\mathbb C}{\mathbb P}^1\times
{\mathbb C}{\mathbb P}^1$ is transversal to every constant section of the trivial ${\mathbb C}
{\mathbb P}^1$--bundle ${\mathcal P}_0$ in \eqref{p1} it follows that
$\text{kernel}({\mathcal S}_{\mathcal P}) \, \subset\, {\mathcal F}$. Thus, \eqref{e15} holds.

The condition in \eqref{e15} will be expressed in words by saying that
the section $s_{\mathcal P}$ satisfies the \textit{transversality condition} for the
connection $\nabla^{\mathcal P}$.

The transversely projective structure $\mathcal P$ can be reconstructed back from the 
triple $({\mathbb P}_{\mathcal P},\, \nabla^{\mathcal P},\, s_{\mathcal P})$ constructed 
in \eqref{e11}, \eqref{e12} and \eqref{e13}. More precisely, it will be shown that any 
triple $({\mathbb P}_{\mathcal P},\, \nabla^{\mathcal P},\, s_{\mathcal P})$ satisfying 
the above conditions produces a unique transversely projective structure.

To prove this, let
\begin{equation}\label{v1}
\varpi\ :\ {\mathbb P}\ \longrightarrow\ M
\end{equation}
be a ${\mathbb C}{\mathbb P}^1$--bundle on $M$
equipped with a flat connection $\nabla$, and let
$$
s\ :\ M \ \longrightarrow\ {\mathbb P}
$$
be a transversely holomorphic section, for the transversely holomorphic
foliation $\widehat{\mathcal F}$, that satisfies the following condition:
Consider the homomorphism
$$
{\mathcal S}\ :\ TM\ \longrightarrow\ s^* T_{\varpi},
$$
where $T_{\varpi}\, \subset\, T^{\mathbb R}{\mathbb P}$ is the relative tangent bundle 
for the projection $\varpi$ in \eqref{v1}, that sends any $v\,\in\, T_mM$, $m\, \in\, 
M$, to $ds (v) -\widetilde{v}$, where $\widetilde{v}\, \in\, T^{\mathbb R}_{s(m)} 
{\mathbb P}$ is the horizontal lift of $v$ for the flat connection $\nabla$ on $\mathbb 
P$, and $ds\,:\, T^{\mathbb R}M\,\longrightarrow\, s^*T^{\mathbb R}{\mathbb P}$ is the 
differential of the map $s$. The condition on $s$ says that the kernel of the above 
homomorphism $\mathcal S$ coincides with ${\mathcal F}\, \subset\, T^{\mathbb R}M$; in 
other words, $s$ satisfies the transversality condition for the connection $\nabla$. We 
will now construct a transversely projective structure from this triple $({\mathbb P},
\, \nabla,\, s)$.

Take any connected simply connected open subset $U\, \subset\, M$. Denote by ${\mathbf P}^f$ the space
of all flat sections of ${\mathbb P}\big\vert_U\, \longrightarrow\, U$ for the flat connection
$\nabla\big\vert_U$. Note that ${\mathbf P}^f$ is a Riemann surface isomorphic to ${\mathbb C}{\mathbb P}^1$.
The evaluation map
$$
\psi\ :\ U\times {\mathbf P}^f \ \longrightarrow\ {\mathbb P}\big\vert_U,\ \ \, (u,\, \beta)\ \longmapsto
\ \beta(u)
$$
is an isomorphism of ${\mathbb C}{\mathbb P}^1$--bundles over $U$. Denoting
the natural projection $U\times {\mathbf P}^f\,\longrightarrow\, {\mathbf P}^f$
by $q_2$, consider the composition of maps
$$
\psi^U\ :=\ q_2\circ \psi^{-1}\ :\ {\mathbb P}\big\vert_U \ \longrightarrow\ {\mathbf P}^f.
$$
It is now straightforward to check that all pairs of the above form $(U,\, \psi^U)$ together define a
transversely projective structure on the transversely holomorphic foliation $(M,\, \widehat{\mathcal F})$.

The above observations are summarized in the following theorem (compare with \cite{LP} and with \cite[Lemma 4.3]{BD}):

\begin{theorem}\label{thm0}
Let $\widehat{\mathcal F}$ be a transversely complex structure on the foliation $\mathcal F$.
Giving a transversely projective on $M$ compatible with $\widehat{\mathcal F}$ is equivalent
to giving a triple of the form $((P,\, {\mathcal F}^P,\, {\mathbb F}^P),\, \nabla,\, s)$, where
\begin{itemize}
\item $(P,\, {\mathcal F}^P)$ is a $C^\infty$ transversely principal ${\rm PGL}(2,{\mathbb C})$--bundle
on $M$, and ${\mathbb F}^P$ is a transversely holomorphic structure on $(P,\,
{\mathcal F}^P)$ (see \eqref{hs}),

\item $\nabla$ is a flat connection on the principal ${\rm PGL}(2,{\mathbb C})$--bundle
$P$ such that the transversely holomorphic structure on $(P,\, {\mathcal F}^P)$ given by $\nabla$ coincides
with the one given by ${\mathbb F}^P$, and

\item $s\, :\, M\, \longrightarrow\, {\mathbb P}\, :=\, P\times^{{\rm PGL}(2,{\mathbb C})}
{\mathbb C}{\mathbb P}^1$ is a holomorphic section of the transversely holomorphic ${\mathbb C}{\mathbb
P}^1$--bundle on $M$ associated to $(P,\, {\mathcal F}^P,\, {\mathbb F}^P)$ that satisfies the
transversality condition for the connection on ${\mathbb P}$ induced by $\nabla$.
\end{itemize}
\end{theorem}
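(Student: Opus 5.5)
The plan is to construct maps in both directions between transversely projective structures compatible with $\widehat{\mathcal F}$ and triples $((P,\,{\mathcal F}^P,\,{\mathbb F}^P),\,\nabla,\,s)$, and then check that the two constructions are mutually inverse up to the natural equivalences. Throughout we identify a principal ${\rm PGL}(2,{\mathbb C})$--bundle with its associated ${\mathbb C}{\mathbb P}^1$--bundle via the isomorphism $\eta$ in \eqref{eta}. Flat connections, partial connections along $\mathcal F$ and along $\mathbb F$ on $P$ correspond to the analogous structures on $P^\eta$, provided the latter satisfy \eqref{crc} and \eqref{crc2}.

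\emph{From structure to triple.} Given $\mathcal P$ with atlas $\{(U_i,\,\sigma_i)\}$, we take $({\mathbb P}_{\mathcal P},\,\nabla^{\mathcal P})$ from \eqref{e11}--\eqref{e12} and let $P$ be its frame bundle. Set ${\mathcal F}^P$ and ${\mathbb F}^P$ equal to the restrictions of the horizontal distribution of $\nabla$ to $\mathcal F$ and (after complexification) to $\mathbb F$, as in the construction of $\nabla^{\mathbb F}$ following \eqref{cc}. Flatness of $\nabla$ gives integrability, ${\rm PGL}(2,{\mathbb C})$--invariance, and the compatibility of the two structures. Condition \eqref{crc2} holds because the transition maps \eqref{e10} are holomorphic in the fibre variable and constant in $M$. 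The section is $s\,=\,s_{\mathcal P}$ from \eqref{e13}. It is transversely holomorphic because locally it is $\sigma_i^*s_0$, with $s_0$ holomorphic and $\sigma_i$ holomorphic transversely. It satisfies the transversality condition by \eqref{e15}.

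\emph{From triple to structure.} This is the construction already given before the statement. On a simply connected $U$, the evaluation map $\psi$ trivializes ${\mathbb P}\vert_U\,\cong\,U\times{\mathbf P}^f$, and we define $\sigma_U\,:=\,\psi^U\circ s\,:\,U\,\longrightarrow\,{\mathbf P}^f\,\cong\,{\mathbb C}{\mathbb P}^1$. The points to check are the following. First, ${\rm kernel}(d\sigma_U)\,=\,{\mathcal S}^{-1}(0)\,=\,\mathcal F$, since in the flat trivialization $\mathcal S$ is exactly $d\sigma_U$. This shows $\sigma_U$ is a submersion with the right fibres (the rank count uses that the fibre is real $2$-dimensional). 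Second, on overlaps the identifications of ${\mathbf P}^f$ for different $U$ are given by parallel transport, which is locally constant and in ${\rm PGL}(2,{\mathbb C})$; this gives the M\"obius condition of Definition \ref{def2}. Third, $\sigma_U$ induces the given $J$ on $N$. For this, transverse holomorphicity of $s$ with respect to ${\mathbb F}^P$, whose flat sections are the constant ones in the trivialization, means that $\widehat q(d\sigma_U)$ kills $\mathbb F$. So $d\sigma_U$ is complex linear from $(N,\,J)$ to $T{\mathbf P}^f$.

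The main obstacle I expect is this third point. One has to carefully unwind that ``holomorphic section'' in the sense of the $\overline\partial$ operator \eqref{e8d} on the associated bundle means $\overline\partial\sigma_U\,=\,0$ along ${\mathbb F}/({\mathcal F}\otimes{\mathbb C})\,=\,N^{0,1}$ (see \eqref{ei2}). The key input is that the transversely holomorphic structure is the one induced by the flat $\nabla$. I would verify this in the local flat frame, where $\nabla^{\mathbb F}$ is just $d$ composed with restriction to $\mathbb F$.

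\emph{Inverse constructions.} Starting from $\mathcal P$, the flat sections of $\sigma_i^*{\mathcal P}_0$ are the constant sections. Hence $\psi^{U_i}\circ s_{\mathcal P}\,=\,\sigma_i$, and we recover the same atlas. Starting from a triple, the bundle $({\mathbb P}_{\mathcal P},\,\nabla^{\mathcal P})$ built from the charts $\sigma_U$ is glued from $U\times{\mathbf P}^f$ by parallel transport. So $\psi$ gives an isomorphism to $({\mathbb P},\,\nabla)$ carrying $s_{\mathcal P}$ to $s$, and this isomorphism is compatible with ${\mathcal F}^P$ and ${\mathbb F}^P$ since both are determined by $\nabla$. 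This gives the asserted equivalence, with triples taken up to isomorphism.
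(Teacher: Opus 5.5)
Your proposal is correct and follows the same route as the paper. From a structure you glue $\sigma_i^*{\mathcal P}_0$ with the diagonal section to get $({\mathbb P}_{\mathcal P},\nabla^{\mathcal P},s_{\mathcal P})$; conversely you use the space ${\mathbf P}^f$ of flat sections over simply connected opens and the evaluation map $\psi$. Your charts $\psi^U\circ s$ are the correct reading of the paper's ``pairs $(U,\psi^U)$''. Your explicit checks fill in what the paper calls straightforward: ${\mathcal S}=d\sigma_U$ in the flat trivialization, M\"obius transitions from parallel transport, compatibility with $J$ via transverse holomorphicity of $s$, and that the two constructions are mutually inverse.
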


\section{Independence of transversely holomorphic projective bundle}

\subsection{Projective bundle of relative dimension one}

In Section \ref{se4} it was shown that giving a transversely projective structure is 
equivalent to giving a flat ${\mathbb C}{\mathbb P}^1$--bundle together with a 
transversely holomorphic section satisfying the transversality condition. In this 
section it will be shown that the isomorphism class of this transversely holomorphic 
${\mathbb C}{\mathbb P}^1$--bundle does not depend on the transversely projective 
structure.

Consider the first jet bundle $J^1(T{\mathbb C}{\mathbb P}^1)\, \longrightarrow\, {\mathbb C}
{\mathbb P}^1$ of the holomorphic tangent bundle of ${\mathbb C}{\mathbb P}^1$. We recall that
$J^1(T{\mathbb C}{\mathbb P}^1)$ fits in the following non-split short exact sequence of holomorphic
vector bundles on ${\mathbb C}{\mathbb P}^1$:
$$
0\, \longrightarrow\, {\mathcal O}_{{\mathbb C}{\mathbb P}^1}\, \longrightarrow\,
J^1(T{\mathbb C}{\mathbb P}^1)\, \longrightarrow\, T{\mathbb C}{\mathbb P}^1\,
\longrightarrow\,0.
$$
Moreover, this non-split short exact sequence determines $J^1(T{\mathbb C}{\mathbb P}^1)$ uniquely up to
a holomorphic isomorphism. The 
action of $\text{PGL}(2,{\mathbb C})$ on ${\mathbb C}{\mathbb P}^1$ (see \eqref{eta}) has a
natural lift to an action of $\text{PGL}(2,{\mathbb C})$ on $T{\mathbb C}{\mathbb P}^1$, which, in
turn, produces an action of $\text{PGL}(2,{\mathbb C})$ on the jet bundle $J^1(T{\mathbb C}{\mathbb P}^1)$.
Consequently, we get an action of $\text{PGL}(2,{\mathbb C})$ on the projective bundle
\begin{equation}\label{e16}
{\mathbb P}(J^1)\ :=\ {\mathbb P}(J^1(T{\mathbb C}{\mathbb P}^1)) \ \longrightarrow\
{\mathbb C}{\mathbb P}^1.
\end{equation}

Consider the trivial ${\mathbb C}{\mathbb P}^1$--bundle ${\mathcal P}_0$ over
${\mathbb C}{\mathbb P}^1$ in \eqref{p1}. The action $\eta$ of $\text{PGL}(2,{\mathbb C})$
on ${\mathbb C}{\mathbb P}^1$ in \eqref{eta} produces the diagonal action of $\text{PGL}(2,{\mathbb C})$
on ${\mathbb C}{\mathbb P}^1\times {\mathbb C}{\mathbb P}^1$. This action on
${\mathbb C}{\mathbb P}^1\times {\mathbb C}{\mathbb P}^1$ makes ${\mathcal P}_0$ a
$\text{PGL}(2,{\mathbb C})$--equivariant holomorphic ${\mathbb C}{\mathbb P}^1$--bundle
over ${\mathbb C}{\mathbb P}^1$.

\begin{lemma}\label{lem1}
There is a natural ${\rm PGL}(2,{\mathbb C})$--equivariant holomorphic isomorphism of the
holomorphic ${\rm PGL}(2,{\mathbb C})$--equivariant ${\mathbb C}{\mathbb P}^1$--bundle
${\mathcal P}_0$ in \eqref{p1} with the
holomorphic ${\rm PGL}(2,{\mathbb C})$--equivariant ${\mathbb C}{\mathbb P}^1$--bundle
${\mathbb P}(J^1)$ in \eqref{e16}.
\end{lemma}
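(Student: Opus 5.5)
The plan is to construct the isomorphism fiberwise from a canonical description of the fiber of ${\mathbb P}(J^1)$ at a point $x\,\in\,{\mathbb C}{\mathbb P}^1$. Then we check that it is holomorphic and ${\rm PGL}(2,{\mathbb C})$--equivariant, with equivariance coming automatically from naturality.

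The key observation is the following. The space $H^0({\mathbb C}{\mathbb P}^1,\, T{\mathbb C}{\mathbb P}^1)\,=\,{\mathfrak{sl}}(2,{\mathbb C})$ is three dimensional. The evaluation map
$$
H^0({\mathbb C}{\mathbb P}^1,\, T{\mathbb C}{\mathbb P}^1)\otimes{\mathcal O}\ \longrightarrow\ J^1(T{\mathbb C}{\mathbb P}^1)
$$
is surjective, since a vector field on ${\mathbb C}{\mathbb P}^1$ with prescribed $1$-jet at $x$ always exists (this uses $H^1(T{\mathbb C}{\mathbb P}^1(-2x))\,=\,H^1({\mathcal O})\,=\,0$). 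Its kernel at $x$ is the line $H^0(T{\mathbb C}{\mathbb P}^1(-2x))$ of vector fields vanishing to order two at $x$. So $J^1(T{\mathbb C}{\mathbb P}^1)_x\,=\,{\mathfrak{sl}}(2,{\mathbb C})/L_x$, where $L_x$ is the nilpotent line fixing $x$ to second order.

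Lines in $J^1_x$ therefore correspond to $2$-planes in ${\mathfrak{sl}}(2,{\mathbb C})$ containing $L_x$. Every $2$-dimensional subalgebra of ${\mathfrak{sl}}(2,{\mathbb C})$ is a Borel subalgebra ${\mathfrak b}_y$, the stabilizer of a unique point $y\,\in\,{\mathbb C}{\mathbb P}^1$. So I would prove that the $2$-planes containing $L_x$ are exactly the Borels ${\mathfrak b}_y$ with $y\,\in\,{\mathbb C}{\mathbb P}^1$. This can be checked directly, or more simply in coordinates with $x\,=\,\infty$ or $x\,=\,0$:
\begin{itemize}
\item for $y\,\neq\, x$, the subalgebra ${\mathfrak b}_y$ contains $L_x$ because vector fields vanishing doubly at $x$ form the nilradical of ${\mathfrak b}_x$, and in coordinates ${\mathfrak b}_y\,\ni\,$ that field;
\item for $y\,=\,x$, the plane is ${\mathfrak b}_x$ itself, whose image in $J^1_x$ is the line ${\mathcal O}_x\,\subset\,J^1_x$ (fields vanishing at $x$).
\end{itemize}
Concretely, with $x\,=\,0$, $L_0\,=\,{\mathbb C}z^2\partial_z$ and ${\mathfrak b}_y\,=\,\langle (z-y)\partial_z,\,(z-y)^2\partial_z\rangle$. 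Hence the $2$-planes containing $z^2\partial_z$ are $\langle z^2\partial_z,\, \partial_z+t z\partial_z\rangle$, and these are matched bijectively with $y\,=\,-1/t$ up to normalization, including $y\,=\,\infty$ and $y\,=\,0$. This defines the map
$$
\Phi\ :\ {\mathcal P}_0\ \longrightarrow\ {\mathbb P}(J^1),\qquad (x,\,y)\ \longmapsto\ {\mathfrak b}_y/L_x\ \subset\ J^1_x .
$$
Under $\Phi$ the diagonal section $s_0$ goes to the natural line ${\mathcal O}\,\subset\,J^1$.

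Equivariance of $\Phi$ is immediate, since $g\,{\mathfrak b}_y\,g^{-1}\,=\,{\mathfrak b}_{gy}$ and $gL_xg^{-1}\,=\,L_{gx}$, and the action on $J^1$ is induced by pushforward of vector fields. Holomorphicity follows because $(x,y)\,\mapsto\,{\mathfrak b}_y$ and $x\,\mapsto\,L_x$ are holomorphic maps to Grassmannians. One can also argue by the transitivity of ${\rm PGL}(2,{\mathbb C})$ on pairs.

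The main obstacle is the fiberwise bijectivity, i.e.\ the coordinate verification above that the Borels through $L_x$ exhaust the pencil of planes containing $L_x$. This is a short computation in the affine chart, and it yields an isomorphism of ${\mathbb C}{\mathbb P}^1$--bundles.
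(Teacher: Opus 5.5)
Your starting point agrees with the paper. Evaluation of global vector fields identifies $J^1(T{\mathbb C}{\mathbb P}^1)_x$ with ${\mathfrak{sl}}(2,{\mathbb C})/L_x$, where $L_x$ is the line of fields vanishing to order two at $x$ (the paper's $\gamma({\rm Hom}(Q,\,L))_x$). So lines in $J^1_x$ correspond to $2$-planes containing $L_x$.

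\textbf{The gap.} Your claim that these planes are the Borel subalgebras ${\mathfrak b}_y$ is false. Every element of ${\mathfrak b}_y$ vanishes at $y$, while the generator of $L_x$ vanishes only at $x$. Hence $L_x\subset{\mathfrak b}_y$ only for $y\,=\,x$.

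In your chart, $z^2\partial_z$ does not vanish at $-1/t$, so $\langle z^2\partial_z,\,(1+tz)\partial_z\rangle\neq{\mathfrak b}_{-1/t}$. Indeed $[z^2\partial_z,\,(1+tz)\partial_z]\,=\,-(2z+tz^2)\partial_z$ does not lie in this plane. So for finite $t$ none of these planes is a subalgebra, and the only subalgebra in the pencil is ${\mathfrak b}_x$.

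Consequently $\Phi(x,y)\,=\,{\mathfrak b}_y/L_x$ is undefined for $y\neq x$. Since ${\mathfrak b}_y\cap L_x\,=\,0$, the image of ${\mathfrak b}_y$ in $J^1_x$ is all of $J^1_x$, not a line. The equivariance argument via $g{\mathfrak b}_yg^{-1}\,=\,{\mathfrak b}_{gy}$ then has nothing to act on.

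\textbf{The coordinate matching is not equivariant either.} The rule $y\,=\,-1/t$ assigns to $y$ the line of $1$-jets at $0$ with $(f(0),\,f'(0))$ proportional to $(-y,\,1)$. The stabilizer element $z\mapsto z/(1+\mu z)$ of $0$ acts on $1$-jets by $(c_0,\,c_1)\mapsto(c_0,\,c_1-2\mu c_0)$. It therefore sends this line to $[(-y,\,1+2\mu y)]$, whereas the line assigned to $y/(1+\mu y)$ is $[(-y,\,1+\mu y)]$.

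\textbf{The correct construction.} The plane to attach to $y$ is $\{A\in{\mathfrak{sl}}(2,{\mathbb C})\,:\,A(\ell_x)\subset\ell_y\}$, where $\ell_x,\,\ell_y\subset{\mathbb C}^2$ are the lines represented by $x,\,y$.
\begin{itemize}
\item For $y\neq x$ this plane is $L_x\oplus L_y$, spanned by the fields vanishing doubly at $x$ and at $y$. In your chart it gives the line $[(-y,\,2)]$, the $1$-jet of $(z-y)^2\partial_z$.
\item For $y\,=\,x$ it is ${\mathfrak b}_x$, giving ${\mathcal O}_x\subset J^1_x$, so your statement about $s_0$ survives.
\end{itemize}
This is exactly the paper's route. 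The restriction map $A\mapsto A\vert_{\ell_x}$ identifies ${\mathfrak{sl}}(2,{\mathbb C})/L_x$ with ${\rm Hom}(\ell_x,\,{\mathbb C}^2)$, i.e.\ $J^1(T{\mathbb C}{\mathbb P}^1)\cong({\mathbb C}{\mathbb P}^1\times{\mathbb C}^2)\otimes L^*$. Canonically, ${\mathbb P}({\rm Hom}(\ell_x,\,{\mathbb C}^2))\,=\,{\mathbb P}({\mathbb C}^2)$. Because each identification is natural, equivariance and holomorphicity (including along the diagonal) are automatic, with no case distinction between $y\,=\,x$ and $y\neq x$.
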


\begin{proof}
The Lie algebra $ pgl(2, {\mathbb C})\,=\, sl(2, {\mathbb C})$ of $\text{PGL}(2,{\mathbb C})$
will be denoted by $\mathfrak g$. Using the action $\eta$ of $\text{PGL}(2,{\mathbb C})$
on ${\mathbb C}{\mathbb P}^1$ (see \eqref{eta}), we have
$$
{\mathfrak g}\ =\ H^0({\mathbb C}{\mathbb P}^1,\, T{\mathbb C}{\mathbb P}^1).
$$
Consider the trivial holomorphic vector bundle ${\mathbb C}{\mathbb P}^1\times{\mathfrak g}\,
\longrightarrow\, {\mathbb C}{\mathbb P}^1$ with fiber ${\mathfrak g}$. Let
\begin{equation}\label{e17}
I\ :\ {\mathbb C}{\mathbb P}^1\times{\mathfrak g}\ =\
{\mathbb C}{\mathbb P}^1\times H^0({\mathbb C}{\mathbb P}^1,\, T{\mathbb C}{\mathbb P}^1)\
\longrightarrow\ J^2(T{\mathbb C}{\mathbb P}^1)
\end{equation}
be the evaluation map to the second order jet bundle $J^2(T{\mathbb C}{\mathbb P}^1)$
that sends any $(x,\,v)\, \in\, {\mathbb C}{\mathbb P}^1\times H^0({\mathbb C}{\mathbb P}^1,
\, T{\mathbb C}{\mathbb P}^1)$ to the restriction of the section $v$ to the second order
infinitesimal neighborhood of the point $x$ of ${\mathbb C}{\mathbb P}^1$. It is
straightforward to check that $I$ is a holomorphic isomorphism of $\text{PGL}
(2,{\mathbb C})$--equivariant holomorphic vector bundles (the action of $\text{PGL}
(2,{\mathbb C})$ on ${\mathbb C}{\mathbb P}^1$ produces a $\text{PGL}
(2,{\mathbb C})$--equivariant structure on $J^2(T{\mathbb C}{\mathbb P}^1))$.

Consider the tautological line subbundle $\iota\, :\, L\, \hookrightarrow\, {\mathbb C}{\mathbb P}^1\times
{\mathbb C}^2$ whose fiber over any $x\, \in\, {\mathbb C}{\mathbb P}^1$ is the line in ${\mathbb C}^2$
represented by $x$. Let $$q\ :\ {\mathbb C}{\mathbb P}^1\times {\mathbb C}^2\ \longrightarrow\
Q\ :=\ ({\mathbb C}{\mathbb P}^1\times {\mathbb C}^2)/L$$ be the natural quotient map. So we have
the short exact sequence of holomorphic vector bundles on ${\mathbb C}{\mathbb P}^1$
\begin{equation}\label{f1}
0\, \longrightarrow\, L\, \stackrel{\iota}{\longrightarrow}\, {\mathbb C}{\mathbb P}^1\times
{\mathbb C}^2 \, \stackrel{q}{\longrightarrow}\, Q\, =\, ({\mathbb C}{\mathbb P}^1\times {\mathbb C}^2)/L
\, \longrightarrow\, 0.
\end{equation}
We have the homomorphism
\begin{equation}\label{e18}
\gamma\ :\ L\otimes Q^*\ =\ \text{Hom}(Q,\, L)\ \longrightarrow\ {\mathbb C}{\mathbb P}^1\times{\mathfrak g}
\end{equation}
that sends any $w_x\, \in\, \text{Hom}(Q,\, L)_x$, $x\, \in\, {\mathbb C}{\mathbb P}^1$, to
$\iota(x)\circ w_x\circ q(x)\, \in\, \text{End}({\mathbb C}^2)$, where $q$ and $\iota$
are the homomorphisms in \eqref{f1}. The image of the homomorphism
$\gamma$ in \eqref{e18} clearly lies in ${\mathfrak g}\, \subset\, \text{End}({\mathbb C}^2)$; in
fact, the image of $\gamma$ lies in the nilpotent locus of $\text{End}({\mathbb C}^2)$. It
can be shown that
\begin{equation}\label{e19}
({\mathbb C}{\mathbb P}^1\times{\mathfrak g} )/\gamma(\text{Hom}(Q,\, L))\ =\
\text{Hom}(L,\, {\mathbb C}{\mathbb P}^1\times {\mathbb C}^2)\ =\ 
({\mathbb C}{\mathbb P}^1\times {\mathbb C}^2)\otimes L^*.
\end{equation}
Indeed, the isomorphism in \eqref{e19} is given by the restriction map
$$
{\mathbb C}{\mathbb P}^1\times \text{End}({\mathbb C}^2)\ \longrightarrow\ 
\text{Hom}(L,\, {\mathbb C}{\mathbb P}^1\times {\mathbb C}^2)
$$
that sends any $(x,\, A)\, \in\, {\mathbb C}{\mathbb P}^1\times \text{End}({\mathbb C}^2)$
to the homomorphism $L_x\, \longrightarrow\, {\mathbb C}^2$ defined by $v\, \longmapsto\, A(v)$.

We have the following commutative diagram of $\text{PGL}(2,{\mathbb C})$--equivariant holomorphic
homomorphisms
\begin{equation}\label{e20}
\begin{matrix}
0 & \longrightarrow & \text{Hom}(Q,\, L) & \stackrel{\gamma}{\longrightarrow} &
{\mathbb C}{\mathbb P}^1\times{\mathfrak g} & \longrightarrow &
({\mathbb C}{\mathbb P}^1\times {\mathbb C}^2)\otimes L^* & \longrightarrow & 0\\
&& \,\,\, \Big\downarrow I'' && \,\,\, \Big\downarrow I && \,\,\, \Big\downarrow I'\\
0 & \longrightarrow & (T{\mathbb C}{\mathbb P}^1)^* & \longrightarrow &
J^2(T{\mathbb C}{\mathbb P}^1) & \longrightarrow & J^1(T{\mathbb C}{\mathbb P}^1) 
& \longrightarrow & 0
\end{matrix}
\end{equation}
where the top exact sequence is given by the isomorphism in \eqref{e19}, the bottom 
exact sequence is the canonical sequence of jet bundles and $I$ is the isomorphism in 
\eqref{e17}; the homomorphisms $I'$ and $I''$ in \eqref{e20} are induced by $I$. Both 
the homomorphisms $I'$ and $I''$ are isomorphisms because $I$ is so. We note that 
$\text{Hom}(Q,\, L)$ is the unique holomorphic line subbundle of ${\mathbb C}{\mathbb 
P}^1\times{\mathfrak g}$ preserved by the action of $\text{PGL}(2,{\mathbb C})$. Since 
the bottom exact sequence in \eqref{e20} is $\text{PGL}(2,{\mathbb C})$--equivariant, 
and $I$ is an isomorphism of $\text{PGL}(2,{\mathbb C})$--equivariant vector bundles, we 
conclude that $I$ takes the line subbundle $\text{Hom}(Q,\, L)\, \subset\, {\mathbb 
C}{\mathbb P}^1\times{\mathfrak g}$ to $(T{\mathbb C}{\mathbb P}^1)^* \, \subset\, 
J^2(T{\mathbb C}{\mathbb P}^1)$.

It is evident that the projective bundle ${\mathbb P}(({\mathbb C}{\mathbb P}^1\times {\mathbb C}^2)
\otimes L^*)$ for the vector bundle $({\mathbb C}{\mathbb P}^1\times {\mathbb C}^2)\otimes L^*$
in \eqref{e20} is holomorphically identified with ${\mathcal P}_0$ in \eqref{p1}. Moreover, this
isomorphism between ${\mathbb P}(({\mathbb C}{\mathbb P}^1\times {\mathbb C}^2)\otimes L^*)$ and
${\mathcal P}_0$ is $\text{PGL}(2,{\mathbb C})$--equivariant.

The isomorphism $I'$ in \eqref{e20} produces a $\text{PGL}(2,{\mathbb C})$--equivariant holomorphic
isomorphism of the holomorphic $\text{PGL}(2,{\mathbb C})$--equivariant ${\mathbb C}{\mathbb P}^1$--bundle
${\mathcal P}_0$ with the holomorphic $\text{PGL}(2,{\mathbb C})$--equivariant ${\mathbb C}
{\mathbb P}^1$--bundle ${\mathbb P}(J^1)$.
\end{proof}

Consider the vector bundle $({\mathbb C}{\mathbb P}^1\times {\mathbb C}^2)\otimes L^*$ in \eqref{e20}.
Tensoring the exact sequence in \eqref{f1} with $L^*$, the following short exact sequence of
holomorphic vector bundles on ${\mathbb C}{\mathbb P}^1$ is obtained:
\begin{equation}\label{e21}
0\, \longrightarrow\, {\mathcal O}_{{\mathbb C}{\mathbb P}^1}\, {\longrightarrow}\, ({\mathbb C}{\mathbb P}^1\times
{\mathbb C}^2)\otimes L^* \, {\longrightarrow}\, Q\otimes L^*\, \longrightarrow\, 0.
\end{equation}
The isomorphism $I'$ in \eqref{e20} produces the following commutative diagram of homomorphisms
\begin{equation}\label{e22}
\begin{matrix}
0 & \longrightarrow & {\mathcal O}_{{\mathbb C}{\mathbb P}^1} &\longrightarrow & ({\mathbb C}
{\mathbb P}^1\times{\mathbb C}^2)\otimes L^* & \longrightarrow & Q\otimes L^* & \longrightarrow & 0\\
&& \Big\Vert && \,\,\,\Big\downarrow I' && \,\,\, \Big\downarrow \widetilde{I}\\ 
0 & \longrightarrow & {\mathcal O}_{{\mathbb C}{\mathbb P}^1} &\longrightarrow & J^1(T{\mathbb C}{\mathbb P}^1)
&\longrightarrow & J^0(T{\mathbb C}{\mathbb P}^1)\,=\, T{\mathbb C}{\mathbb P}^1&\longrightarrow & 0
\end{matrix}
\end{equation}
where the top exact sequence is the one in \eqref{e21}, the bottom exact sequence is the 
canonical sequence of jet bundles and the homomorphism $\widetilde{I}$ is induced by 
$I'$; the homomorphism $\widetilde{I}$ is an isomorphism because $I'$ is so. Note that 
${\mathcal O}_{{\mathbb C}{\mathbb P}^1}$ is the unique holomorphic line subbundle of 
$({\mathbb C}{\mathbb P}^1\times{\mathbb C}^2)\otimes L^*$ preserved by the action of 
$\text{PGL}(2,{\mathbb C})$. Since the bottom exact sequence in \eqref{e21} is 
$\text{PGL}(2,{\mathbb C})$--equivariant, and $I'$ is an isomorphism of 
$\text{PGL}(2,{\mathbb C})$--equivariant vector bundles, we conclude that $I'$ takes the 
line subbundle ${\mathcal O}_{{\mathbb C}{\mathbb P}^1}\,\subset\, ({\mathbb C}{\mathbb 
P}^1\times{\mathbb C}^2)\otimes L^*$ in \eqref{e21} to ${\mathcal O}_{{\mathbb 
C}{\mathbb P}^1} \, \subset\, J^1(T{\mathbb C}{\mathbb P}^1)$.

The isomorphism $\widetilde{I}$ in \eqref{e22} coincides with the natural isomorphism 
between $T{\mathbb C}{\mathbb P}^1$ and $Q\otimes L^*$.

\begin{lemma}\label{lem2}
The isomorphism between the ${\mathbb C}{\mathbb P}^1$--bundles ${\mathcal P}_0$ and ${\mathbb P}(J^1)\,=\,
{\mathbb P}(J^1(T{\mathbb C}{\mathbb P}^1))$
in Lemma \ref{lem1} takes the section $s_0$ of ${\mathcal P}_0$ in \eqref{e12b} to the section of
${\mathbb P}(J^1)$ given by the line subbundle ${\mathcal O}_{{\mathbb C}{\mathbb P}^1}
\, \subset\, J^1(T{\mathbb C}{\mathbb P}^1)$ in \eqref{e22}.
\end{lemma}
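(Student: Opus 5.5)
We unwind the isomorphism of Lemma \ref{lem1}. It is the composition of the identification ${\mathcal P}_0\,\cong\,{\mathbb P}(({\mathbb C}{\mathbb P}^1\times {\mathbb C}^2)\otimes L^*)$ with the projectivization of $I'$ in \eqref{e20}. The statement therefore splits into two claims.

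The first claim is that the identification ${\mathcal P}_0\,\cong\,{\mathbb P}(({\mathbb C}{\mathbb P}^1\times {\mathbb C}^2)\otimes L^*)$ takes the diagonal section $s_0$ of \eqref{e12b} to the section given by the line subbundle ${\mathcal O}_{{\mathbb C}{\mathbb P}^1}\,=\, L\otimes L^*\,\subset\, ({\mathbb C}{\mathbb P}^1\times {\mathbb C}^2)\otimes L^*$ of \eqref{e21}.

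The second claim is that $I'$ takes this ${\mathcal O}_{{\mathbb C}{\mathbb P}^1}$ onto ${\mathcal O}_{{\mathbb C}{\mathbb P}^1}\,\subset\, J^1(T{\mathbb C}{\mathbb P}^1)$. This was already established in the discussion after \eqref{e22}. Its input is that ${\mathcal O}_{{\mathbb C}{\mathbb P}^1}$ is the unique ${\rm PGL}(2,{\mathbb C})$--invariant holomorphic line subbundle of $({\mathbb C}{\mathbb P}^1\times{\mathbb C}^2)\otimes L^*$. Since $I'$ is ${\rm PGL}(2,{\mathbb C})$--equivariant and the bottom row of \eqref{e22} is equivariant, $I'$ must carry the invariant line subbundle to the invariant line subbundle ${\mathcal O}_{{\mathbb C}{\mathbb P}^1}\,\subset\, J^1(T{\mathbb C}{\mathbb P}^1)$. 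If one wants to double check the uniqueness, note that an invariant line subbundle is determined by its fiber over one point $x$. That fiber must be a line in ${\mathbb C}^2\otimes L^*_x$ preserved by the stabilizer (a Borel subgroup), and the only such line is $L_x\otimes L^*_x$.

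For the first claim, I would make the identification explicit. Projectivizing a vector bundle is unchanged by tensoring with a line bundle, so ${\mathbb P}(({\mathbb C}{\mathbb P}^1\times {\mathbb C}^2)\otimes L^*)\,=\,{\mathbb P}({\mathbb C}{\mathbb P}^1\times {\mathbb C}^2)\,=\,{\mathbb C}{\mathbb P}^1\times{\mathbb C}{\mathbb P}^1\,=\,{\mathcal P}_0$. This identification is ${\rm PGL}(2,{\mathbb C})$--equivariant for the diagonal action. Under it, the line $L_x\otimes L_x^*$ corresponds to the line $L_x\,\subset\,{\mathbb C}^2$, which is the point $x$ itself by the definition of the tautological bundle. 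So the section defined by ${\mathcal O}_{{\mathbb C}{\mathbb P}^1}$ is $x\,\longmapsto\,(x,\,x)\,=\,s_0(x)$.

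Combining the two claims gives the lemma. The one point needing care is to confirm that the identification used in Lemma \ref{lem1} is this tautological one, and not some other equivariant automorphism of ${\mathcal P}_0$. As a fallback, any equivariant holomorphic automorphism of ${\mathcal P}_0$ over the identity preserves the diagonal. Indeed, the diagonal is the unique invariant section: a section is determined by its value at $x$, and that value must be a fixed point of the stabilizer of $x$, which is $x$ itself. Hence the conclusion does not depend on this choice.
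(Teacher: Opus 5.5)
Your proposal is correct and follows essentially the same route as the paper: identify ${\mathcal P}_0$ with ${\mathbb P}({\mathbb C}{\mathbb P}^1\times{\mathbb C}^2)$, where the diagonal $s_0$ corresponds to $L$. Then pass to ${\mathbb P}(({\mathbb C}{\mathbb P}^1\times{\mathbb C}^2)\otimes L^*)$, where it corresponds to ${\mathcal O}_{{\mathbb C}{\mathbb P}^1}=L\otimes L^*$, and conclude from \eqref{e22}, i.e.\ from $I'$ carrying ${\mathcal O}_{{\mathbb C}{\mathbb P}^1}$ to ${\mathcal O}_{{\mathbb C}{\mathbb P}^1}\subset J^1(T{\mathbb C}{\mathbb P}^1)$. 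Your Borel-stabilizer check of the uniqueness of the invariant line subbundle, and your observation that the diagonal is the unique ${\rm PGL}(2,{\mathbb C})$--invariant section (so the conclusion does not depend on which equivariant identification is used), are correct additions.
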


\begin{proof}
Consider the ${\mathbb C}{\mathbb P}^1$--bundle
${\mathcal P}_0\,=\, {\mathbb C}{\mathbb P}^1\times
{\mathbb C}{\mathbb P}^1 \, \stackrel{p_1}{\longrightarrow}\, {\mathbb C}{\mathbb P}^1$ in
\eqref{p1}. It is canonically identified with the projective bundle
${\mathbb P}({\mathbb C}{\mathbb P}^1\times {\mathbb C}^2)$. This identification
between ${\mathcal P}_0$ and ${\mathbb P}({\mathbb C}{\mathbb P}^1\times {\mathbb C}^2)$
takes the section $s_0$ of ${\mathcal P}_0$ (see \eqref{e12b}) to the section of
${\mathbb P}({\mathbb C}{\mathbb P}^1\times {\mathbb C}^2)$ given by the line subbundle
$L\, \subset\, {\mathbb C}{\mathbb P}^1\times{\mathbb C}^2 $ in \eqref{f1}.

Recall that the short exact sequence in \eqref{e21} is obtained by tensoring the
exact sequence in \eqref{f1} by $L^*$. The projective bundle
${\mathbb P}(({\mathbb C}{\mathbb P}^1\times {\mathbb C}^2)\otimes L^*)$ is identified
with ${\mathcal P}_0\,=\, {\mathbb P}({\mathbb C}{\mathbb P}^1\times {\mathbb C}^2)$, and this identification
takes the section $s_0$ of ${\mathcal P}_0$ in \eqref{e12b} (given by the diagonal) to
the section of ${\mathbb P}(({\mathbb C}{\mathbb P}^1\times {\mathbb C}^2)\otimes L^*)$
given by the line subbundle ${\mathcal O}_{{\mathbb C}{\mathbb P}^1}\, \subset\,
({\mathbb C}{\mathbb P}^1\times {\mathbb C}^2)\otimes L^*$ in \eqref{e21}.
Now the lemma follows from the commutative diagram in \eqref{e22}.
\end{proof}

\subsection{Transversal jet bundle}

Let $(V,\, D)$ be a transversely complex vector bundle on the foliated manifold $(M,\, {\mathcal F})$.
As in Section \ref{se3.1}, denote by $\mathbb V$ the sheaf on $M$ given by the locally defined flat
sections of $V$ for the partial connection $D$. Let $\widehat{\mathcal F}$ be a transversely complex
structure on $\mathcal F$. Let
$$
\overline{\partial}_V \ :\ {\mathbb V} \ \longrightarrow\ {\mathbb V} \otimes_{{\mathcal C}({\mathcal 
F})} ({\mathbb N}^{0,1})^*
$$
be a Dolbeault operator giving a transversely holomorphic structure on $(V,\, D)$ (see \eqref{e8d}).

Let
\begin{equation}\label{e23}
\widehat{\mathbb V}\, \ :=\,\ \text{kernel}(\overline{\partial}_V)\ \,\subset \ \, \mathbb V
\end{equation}
be the subsheaf defined by the locally defined transversely holomorphic sections.

Consider the very special case where $(V,\, D,\, \overline{\partial}_V)$
are given by the trivial complex line bundle $M\times {\mathbb C}\,
\longrightarrow\, M$ equipped with the trivial flat connection given by the de Rham differential.
In this spacial case, the corresponding sheaf $\widehat{\mathbb V}$ in \eqref{e23}
will be denoted by ${\mathcal O}_{\widehat{\mathcal F}}$.

Getting back to the general case of $(V,\, D,\, \overline{\partial}_V)$, consider the sheaf of homomorphisms
${\rm Hom}(\widehat{\mathbb V},\, {\mathcal O}_{\widehat{\mathcal F}})$. For any nonnegative integer $k$, let
\begin{equation}\label{e24}
{\rm Diff}^k(\widehat{\mathbb V},\, {\mathcal O}_{\widehat{\mathcal F}})\, \ \subset\,\
{\rm Hom}(\widehat{\mathbb V},\, {\mathcal O}_{\widehat{\mathcal F}})
\end{equation}
be the subsheaf defined by the homomorphisms that are locally differential operators of order less
than or equal to $k$. It is straightforward to check that
${\rm Diff}^k(\widehat{\mathbb V},\, {\mathcal O}_{\widehat{\mathcal F}})$ defined in \eqref{e24} is a
transversely holomorphic vector bundle on $(M,\, \widehat{\mathcal F})$ of rank $(k+1)\cdot\text{rank}(V)$.
The dual transversely holomorphic vector bundle
\begin{equation}\label{e25}
J^k_{\widehat{\mathcal F}}(V)\, \ :=\, \ ({\rm Diff}^k(\widehat{\mathbb V},\, {\mathcal O}_{\widehat{\mathcal F}}))^*
\end{equation}
will be called the \textit{transversal jet bundle of order} $k$ of the transversely holomorphic 
vector bundle $(V,\, D,\, \overline{\partial}_V)$.

Consider the transversely holomorphic line bundle $N^{1,0}$ (see \eqref{ej2}). Recall that
it has the partial connection ${}^N\nabla^{1,0}$ (see \eqref{e7}), and its transversely
holomorphic structure is given by the combination of the Lie bracket of vector fields on $M$
and the projection of vector fields to $N^{1,0}$.

We note that the transversely holomorphic vector bundle $J^0_{\widehat{\mathcal F}}(V)$ in \eqref{e25}
is the transversely holomorphic vector bundle $(V,\, D,\, \overline{\partial}_V)$ itself, and for any
integer $k\, \geq\, 1$, the transversely holomorphic vector bundle $J^k_{\widehat{\mathcal F}}(V)$
fits in the following short exact sequence of transversely holomorphic vector bundles
\begin{equation}\label{e26}
0 \ \longrightarrow\ V\otimes ((N^{1,0})^*)^{\otimes k} \ \longrightarrow\ 
J^k_{\widehat{\mathcal F}}(V)\ \longrightarrow\ J^{k-1}_{\widehat{\mathcal F}}(V)
\ \longrightarrow\ 0.
\end{equation}
The projection $J^k_{\widehat{\mathcal F}}(V)\ \longrightarrow\ J^{k-1}_{\widehat{\mathcal F}}(V)$
in \eqref{e26} is the dual of the natural inclusion map
$$
{\rm Diff}^{k-1}(\widehat{\mathbb V},\, {\mathcal O}_{\widehat{\mathcal F}})\ \hookrightarrow
\ {\rm Diff}^k(\widehat{\mathbb V},\, {\mathcal O}_{\widehat{\mathcal F}});
$$
the inclusion map $V\otimes ((N^{1,0})^*)^{\otimes k} \, \hookrightarrow\, 
J^k_{\widehat{\mathcal F}}(V)$ in \eqref{e26} is the dual of the symbol map
$$
{\rm Diff}^k(\widehat{\mathbb V},\, {\mathcal O}_{\widehat{\mathcal F}})\
\ \longrightarrow\ V^*\otimes (N^{1,0})^{\otimes k}
$$
defined on the differential operators.

\subsection{Projective bundle for transversely projective structures}

Consider the short exact sequence in \eqref{e26} for $k\,=\,1$ and $V\,=\, N^{1,0}$:
\begin{equation}\label{e27}
0 \ \longrightarrow\ {\mathcal O}_{\widehat{\mathcal F}} \ \longrightarrow\ 
J^1_{\widehat{\mathcal F}}(N^{1,0})\ \longrightarrow\ N^{1,0}
\ \longrightarrow\ 0.
\end{equation}
Let
\begin{equation}\label{e28}
{\mathbb P}_{\widehat{\mathcal F}}\ :=\ {\mathbb P}(J^1_{\widehat{\mathcal F}}(N^{1,0}))
\ \longrightarrow\ M
\end{equation}
be the corresponding transversely holomorphic ${\mathbb C}{\mathbb P}^1$--bundle. The line
subbundle ${\mathcal O}_{\widehat{\mathcal F}} \, \hookrightarrow\, 
J^1_{\widehat{\mathcal F}}(N^{1,0})$ in \eqref{e27} produces a transversely holomorphic section
\begin{equation}\label{e29}
{\mathcal S}_{\widehat{\mathcal F}}\ :\ M \ \longrightarrow\ {\mathbb P}_{\widehat{\mathcal F}}
\end{equation}
of the projective bundle ${\mathbb P}_{\widehat{\mathcal F}}$ in \eqref{e28}.

\begin{theorem}\label{thm1}
Let ${\mathcal P}$ be transversely projective structure on the transversely complex foliation
$\widehat{\mathcal F}$. Let ${\mathbb P}_{\mathcal P} \, \longrightarrow\, M$ be the transversely
${\mathbb C}{\mathbb P}^1$--bundle (see \eqref{e11}) corresponding to $\mathcal P$, and let
$$
s_{\mathcal P}\ :\ M \ \longrightarrow\ {\mathbb P}_{\mathcal P}
$$
be the section associated to $\mathcal P$ (see \eqref{e13}). Then the transversely
${\mathbb C}{\mathbb P}^1$--bundle ${\mathbb P}_{\mathcal P}$ is canonically holomorphically identified with
${\mathbb P}_{\widehat{\mathcal F}}$ in \eqref{e28}. This identification takes the section
$s_{\mathcal P}$ of ${\mathbb P}_{\mathcal P}$ to the section
${\mathcal S}_{\widehat{\mathcal F}}$ of ${\mathbb P}_{\widehat{\mathcal F}}$ in \eqref{e29}.
\end{theorem}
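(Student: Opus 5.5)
The plan is to build the identification locally in the charts of $\mathcal P$, using Lemma \ref{lem1} and Lemma \ref{lem2}, and then check that the local identifications glue. Fix data $\{(U_i,\sigma_i)\}_{i\in I}$ defining $\mathcal P$ as in Section \ref{se4}, with simply connected $U_i$. Over $U_i$ we have ${\mathbb P}_{\mathcal P}\big\vert_{U_i}=\sigma_i^*{\mathcal P}_0$. Lemma \ref{lem1} gives a holomorphic isomorphism ${\mathcal P}_0\cong {\mathbb P}(J^1(T{\mathbb C}{\mathbb P}^1))$ over ${\mathbb C}{\mathbb P}^1$, and pulling it back gives ${\mathbb P}_{\mathcal P}\big\vert_{U_i}\cong \sigma_i^*{\mathbb P}(J^1(T{\mathbb C}{\mathbb P}^1))$.

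The second step identifies $\sigma_i^* J^1(T{\mathbb C}{\mathbb P}^1)$ with $J^1_{\widehat{\mathcal F}}(N^{1,0})\big\vert_{U_i}$ as transversely holomorphic bundles. The differential $d\sigma_i$ identifies $N^{1,0}\big\vert_{U_i}$ with $\sigma_i^*T{\mathbb C}{\mathbb P}^1$, and this is compatible with the partial connection ${}^N\nabla^{1,0}$ and with the transversely holomorphic structure. Consequently the sheaf of transversely holomorphic sections of $N^{1,0}$ on $U_i$ is the sheaf $\sigma_i^{-1}$ of holomorphic sections of $T{\mathbb C}{\mathbb P}^1$ on $D_i$; one needs to note here that locally on $U_i$ the fibers of $\sigma_i$ are connected, so one may shrink $U_i$ to product charts. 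Similarly ${\mathcal O}_{\widehat{\mathcal F}}=\sigma_i^{-1}{\mathcal O}_{D_i}$. Since transverse differential operators of order $\le 1$ between these sheaves are exactly pullbacks of holomorphic differential operators on $D_i$, we get ${\rm Diff}^1$ on $U_i$ equal to $\sigma_i^*{\rm Diff}^1(T{\mathbb C}{\mathbb P}^1,{\mathcal O})$. Dualizing, $J^1_{\widehat{\mathcal F}}(N^{1,0})\big\vert_{U_i}\cong\sigma_i^*J^1(T{\mathbb C}{\mathbb P}^1)$, and this isomorphism carries the sequence \eqref{e27} to the pullback of the jet sequence. Combining the two steps gives an isomorphism $\Phi_i:{\mathbb P}_{\mathcal P}\big\vert_{U_i}\to{\mathbb P}_{\widehat{\mathcal F}}\big\vert_{U_i}$. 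By Lemma \ref{lem2}, $\Phi_i$ sends $\sigma_i^*s_0=s_{\mathcal P}\big\vert_{U_i}$ to the section given by ${\mathcal O}\subset J^1$, which is ${\mathcal S}_{\widehat{\mathcal F}}\big\vert_{U_i}$.

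The main point to check is gluing: $\Phi_i=\Phi_j$ on each $U^c_{i,j}$. On that component, ${\mathbb P}_{\mathcal P}$ is glued by $f^c_{i,j}\times f^c_{i,j}$ (see \eqref{e10}). On the jet side, the two identifications of $J^1_{\widehat{\mathcal F}}(N^{1,0})$ with $\sigma_i^*J^1$ and with $\sigma_j^*J^1$ differ by the natural action of the biholomorphism $f^c_{i,j}$ on $J^1(T{\mathbb C}{\mathbb P}^1)$. This holds because $\sigma_j=f^c_{i,j}\circ\sigma_i$, and the jet construction is natural under biholomorphisms. Since $f^c_{i,j}\in{\rm PGL}(2,{\mathbb C})$ and the isomorphism of Lemma \ref{lem1} is ${\rm PGL}(2,{\mathbb C})$--equivariant, the two compositions agree, so $\Phi_i=\Phi_j$. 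The only point needing care is that the jet-side naturality matches the specific equivariant structure on $J^1(T{\mathbb C}{\mathbb P}^1)$ used in Lemma \ref{lem1}, namely the one induced by the lifted action on $T{\mathbb C}{\mathbb P}^1$; this is the case by construction.

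Finally, the $\Phi_i$ glue to a global isomorphism $\Phi$. It is transversely holomorphic because each $\Phi_i$ is the pullback of a holomorphic map. It is canonical because it is independent of the chosen atlas: refining or enlarging the atlas within the equivalence class changes charts only by Möbius maps, and the equivariance argument above applies again. The statement about sections follows from the local statement.
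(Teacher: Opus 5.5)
Your proposal is correct and follows essentially the same route as the paper: identify ${\mathbb P}_{\mathcal P}\big\vert_{U_i}=\sigma_i^*{\mathcal P}_0$ with $\sigma_i^*{\mathbb P}(J^1)$ via Lemma \ref{lem1}, use the ${\rm PGL}(2,{\mathbb C})$--equivariance of that isomorphism to see that the local identifications are compatible with the M\"obius transition maps $f^c_{i,j}$, and apply Lemma \ref{lem2} for the sections. The difference is only in detail: you spell out two steps the paper leaves implicit, namely the identification $J^1_{\widehat{\mathcal F}}(N^{1,0})\big\vert_{U_i}\cong \sigma_i^*J^1(T{\mathbb C}{\mathbb P}^1)$ compatible with \eqref{e27}, and the explicit compatibility check on each $U^c_{i,j}$.
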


\begin{proof}
Recall the construction of the triple $({\mathbb P}_{\mathcal P},\, \nabla^{\mathcal P},\,
s_{\mathcal P})$ from ${\mathcal P}$ (done in \eqref{e11}, \eqref{e12} and \eqref{e13}).
Take an open subset $U\, \subset\, M$ and a submersion $\sigma\, :\, U\, \longrightarrow\,
{\mathbb C}{\mathbb P}^1$ which is compatible with the transversely projective structure $\mathcal P$.
Then the transversely holomorphic line bundle $N^{1,0}$ (see \eqref{ej2}) is identified with
$\sigma^* T^{1,0}{\mathbb C}{\mathbb P}^1$. We have
${\mathbb P}_{\mathcal P}\big\vert_U\,=\, \sigma^*{\mathcal P}_0$, where ${\mathcal P}_0$ is
the projective bundle in \eqref{p1}. The integrable connection $\nabla^{\mathcal P}\big\vert_U$ is
the pullback $\sigma^* \mathbf{\nabla}_0$, where $\mathbf{\nabla}_0$ is the integrable connection
on ${\mathcal P}_0$ constructed in \eqref{p2}. The section $s_{\mathcal P}\big\vert_U$ is the pulled
back section $\sigma^* s_0$, where $s_0$ is the section of ${\mathcal P}_0$ in \eqref{e12b}.

Thus, from Lemma \ref{lem1} it follows immediately that the transversely ${\mathbb 
C}{\mathbb P}^1$--bundle ${\mathbb P}_{\mathcal P}$ is holomorphically identified with 
the transversely ${\mathbb C}{\mathbb P}^1$--bundle ${\mathbb P}_{\widehat{\mathcal F}}$ 
in \eqref{e28}. The key point Lemma \ref{lem1} is that
the isomorphism between ${\mathcal P}_0$ and ${\mathbb P}(J^1)$ is
${\rm PGL}(2,{\mathbb C})$--equivariant.

Similarly, from Lemma \ref{lem2} it follows immediately that the above identification
between ${\mathbb P}_{\mathcal P}$ and ${\mathbb P}_{\widehat{\mathcal F}}$ takes the
section $s_{\mathcal P}$ of ${\mathbb P}_{\mathcal P}$ to the section
${\mathcal S}_{\widehat{\mathcal F}}$ of ${\mathbb P}_{\widehat{\mathcal F}}$ in \eqref{e29}.
\end{proof}

\section{Space of transversely projective structures}

As before, $(M,\, F)$ is a foliated smooth manifold of real codimension two, and 
$\widehat{\mathcal F}$ is a transversely complex structure on the foliation. For 
notational convenience, the transversely holomorphic line bundle $(N^{1,0})^*$ (see 
\eqref{ej2}) will be denoted by $\mathbb K$. Let
\begin{equation}\label{e30}
H^0_{\mathcal F}(M,\, {\mathbb K}^{\otimes 2})
\end{equation}
be the space of all transversely holomorphic sections of ${\mathbb K}^{\otimes 2}$.

\begin{remark}
It is known that not every foliation $(M,\, \widehat{\mathcal F})$ admits a transversely 
complex projective structure compatible with $\widehat{\mathcal F}$; see \cite{BD}, 
\cite{FMP}.
\end{remark} 

The following proposition generalizes \cite[Lemma 4.1]{BD}:

\begin{proposition}\label{prop1}
Assume that the transversely holomorphic foliation $(M,\, \widehat{\mathcal F})$ admits a
transversely complex projective structure compatible with $\widehat{\mathcal F}$. Then the space of
all transversely projective structures compatible with $\widehat{\mathcal F}$ is an affine space
for the complex vector space $H^0_{\mathcal F}(M,\, {\mathbb K}^{\otimes 2})$ in \eqref{e30}.
\end{proposition}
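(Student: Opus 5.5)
We plan to use Theorem \ref{thm1} to reduce the problem to classifying flat connections on the fixed transversely holomorphic ${\mathbb C}{\mathbb P}^1$--bundle ${\mathbb P}_{\widehat{\mathcal F}}$. By Theorem \ref{thm0}, a transversely projective structure ${\mathcal P}$ compatible with $\widehat{\mathcal F}$ is the same as a flat connection on a transversely holomorphic ${\mathbb C}{\mathbb P}^1$--bundle together with a transversely holomorphic section satisfying the transversality condition. Theorem \ref{thm1} identifies the pair $({\mathbb P}_{\mathcal P},\, s_{\mathcal P})$ canonically with $({\mathbb P}_{\widehat{\mathcal F}},\, {\mathcal S}_{\widehat{\mathcal F}})$. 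Hence the space of transversely projective structures compatible with $\widehat{\mathcal F}$ is identified with the space of flat connections $\nabla$ on the principal ${\rm PGL}(2,{\mathbb C})$--bundle $P$ underlying ${\mathbb P}_{\widehat{\mathcal F}}$ such that three conditions hold:
\begin{enumerate}
\item $\nabla$ induces the given transversely holomorphic structure ${\mathbb F}^P$;
\item ${\mathcal S}_{\widehat{\mathcal F}}$ satisfies the transversality condition for $\nabla$;
\item $\nabla$ is compatible with the canonical identifications, in a sense made precise in the next paragraph.
\end{enumerate}

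To compare two structures ${\mathcal P}_1$ and ${\mathcal P}_2$, we take the difference $\nabla^1-\nabla^2$. This is a $C^\infty$ one-form on $M$ with values in ${\rm ad}(P)$. Because both connections induce ${\mathbb F}^P$, the difference vanishes on ${\mathbb F}$. Therefore it is a section of ${\rm ad}(P)\otimes (T^{\mathbb C}M/{\mathbb F})^* = {\rm ad}(P)\otimes {\mathbb K}$, and flatness of both connections makes this section transversely holomorphic.

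The key step is to show that the difference lies in the smallest piece of the natural filtration of ${\rm ad}(P)$. This filtration comes from the line subbundle ${\mathcal O}_{\widehat{\mathcal F}}\subset J^1_{\widehat{\mathcal F}}(N^{1,0})$. Its smallest piece is ${\rm Hom}(N^{1,0},{\mathcal O}_{\widehat{\mathcal F}})={\mathbb K}$, so the difference lies in ${\mathbb K}\otimes{\mathbb K}$. We will verify this locally using a chart $\sigma$ of ${\mathcal P}_1$. In that chart ${\mathcal P}_2$ is given by $f\circ\sigma$ with $f$ holomorphic, and Lemma \ref{lem1} and Lemma \ref{lem2} describe everything on ${\mathcal P}_0$. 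Both structures yield the same section ${\mathcal S}_{\widehat{\mathcal F}}$, and the symbol map ${\mathcal S}_{\mathcal P}$ of \eqref{e14} composed with $N\to s^*T_\varphi$ is the canonical isomorphism $N^{1,0}\cong Q\otimes L^*$ from \eqref{e22}. These two facts force the components of $\nabla^1-\nabla^2$ outside the nilpotent part to vanish. Concretely, the difference of the two flat ${\rm PGL}(2)$ trivializations is the Schwarzian derivative $S(f)\,d\sigma^{\otimes 2}$. This gives a well-defined map ${\mathcal P}_2\mapsto {\mathcal P}_2-{\mathcal P}_1\in H^0_{\mathcal F}(M,{\mathbb K}^{\otimes 2})$, and the cocycle identity is immediate because differences of connections add.

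For the converse, we start from ${\mathcal P}_1$ and $\theta\in H^0_{\mathcal F}(M,{\mathbb K}^{\otimes 2})$ and set $\nabla^1+\theta$, viewing $\theta$ as a nilpotent ${\rm ad}(P)$--valued one-form. We then check three things:
\begin{enumerate}
\item $\nabla^1+\theta$ is flat, because $[\theta,\theta]=0$ (it is nilpotent of rank one in a line) and $d^{\nabla^1}\theta=0$, which follows from transverse holomorphicity together with $\theta$ being a $(1,0)$ form in the single transverse variable;
\item it still restricts to ${\mathbb F}^P$, because $\theta$ vanishes on ${\mathbb F}$;
\item ${\mathcal S}_{\widehat{\mathcal F}}$ remains transversal, because $\theta$ maps into ${\rm Hom}(N^{1,0},{\mathcal O})$, which kills the line ${\mathcal O}_{\widehat{\mathcal F}}$ and hence does not change the second fundamental form of the section.
\end{enumerate}
Locally this amounts to solving $S(f)=\theta$, which is solvable on simply connected transversals. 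Theorem \ref{thm0} then turns the new triple into a structure. Free transitivity follows from injectivity, since equal connections give equal structures.

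The main obstacle is the filtration claim: showing that the difference of connections takes values in ${\mathbb K}\subset{\rm ad}(P)$, rather than merely in ${\rm ad}(P)\otimes{\mathbb K}$. This requires tracking the identification of Theorem \ref{thm1}, and showing that it is independent of ${\mathcal P}$, including the second-order data. We would first attempt this through the explicit local Schwarzian computation on ${\mathcal P}_0$.
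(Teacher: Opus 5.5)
Your proposal is correct and follows essentially the paper's route. In both, projective structures are identified, via Theorem \ref{thm1} and Lemma \ref{lem1}, with flat connections on the fixed bundle ${\mathbb P}_{\widehat{\mathcal F}}$. The line $\mathbb K={\rm Hom}(N^{1,0},{\mathcal O}_{\widehat{\mathcal F}})$ is embedded in ${\rm ad}(P)$ by $\alpha\mapsto\iota\circ\alpha\circ q$, and differences of connections are read off in $\mathbb K^{\otimes 2}$. The only organizational difference is that you work directly on $M$, whereas the paper first treats a Riemann surface and then patches the local differences $\sigma_i^*\theta_i$.

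One justification needs correcting. Having the same section and the same second fundamental form only places $\nabla^1-\nabla^2$ in $\mathfrak b\otimes\mathbb K$, where $\mathfrak b\subset{\rm ad}(P)$ is the stabilizer of the line ${\mathcal O}_{\widehat{\mathcal F}}$. Flatness adds nothing here: every transversely holomorphic section of ${\rm ad}(P)\otimes\mathbb K$ is $d^{\nabla}$-closed, and $[\omega,\omega]=0$ because $\mathbb K$ is a line. So the vanishing of the Cartan component $\mathfrak b/\mathbb K$ is a property of the specific canonical identification, not a consequence of those two facts.

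The explicit computation you propose is what actually proves this step, and it does work. In the frame $j^1(\partial_z),\ j^1((z-x)\partial_z)$ of the jet bundle, the connection coming from the chart $f\circ\sigma$ differs from the one coming from $\sigma$ by a scalar plus $S(f)$ in the nilpotent slot. Projectively, the difference is exactly $S(f)\,d\sigma^{\otimes 2}$, as you state. This is precisely the step the paper asserts without proof (``the image of this injective map is an affine space for $H^0(X,\,K^{\otimes 2}_X)$''), so carrying it out makes your argument more complete than the paper's.
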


\begin{proof}
Let $X$ be a Riemann surface. Its holomorphic cotangent bundle will be denoted by $K_X$. It is known that the 
space of all complex projective structures on $X$ (compatible with the complex structure of $X$) is
an affine space for the vector space $H^0(X,\, K^{\otimes 2}_X)$ \cite{Gu1, Gu2} (see also \cite{LM} and \cite[Chapter 4]{StG}, revisiting and building upon the pioneering work of Schwarz on the subject). To reprove this fact here, consider the
principal $\text{PGL}(2,{\mathbb C})$--bundle $P_{\rm PGL}$ associated to the projective
bundle ${\mathbb P}(J^1(TX))$. We will show that
\begin{equation}\label{e31}
K_X\ \hookrightarrow\ \text{ad}(P_{\rm PGL}),
\end{equation}
where $\text{ad}(P_{\rm PGL})$ is the adjoint bundle of $P_{\rm PGL}$. For this, consider the natural
short exact sequence of jet bundles
\begin{equation}\label{js}
0\ \longrightarrow\ TX\otimes K_X\ =\ {\mathcal O}_X \ \stackrel{\iota}{\longrightarrow}\ J^1(TX)\
\stackrel{q}{\longrightarrow}\ J^0(TX) \ = TX \ \longrightarrow\ 0.
\end{equation}
We have the fiberwise injective homomorphism
$$
\Psi\ :\ K_X\ =\ \text{Hom}(TX,\, {\mathcal O}_X)\ \longrightarrow\ \text{End}(J^1(TX))
$$
defined by $\alpha\, \longmapsto\, \iota\circ\alpha\circ q$, where $\iota$ and $q$
are the maps in \eqref{js}. This homomorphism $\Psi$ gives the inclusion map
in \eqref{e31}.

Tensoring both sides of \eqref{e31} and taking global sections, we have
\begin{equation}\label{e32}
H^0(X,\, K^{\otimes 2}_X)\ \subset\ H^0(X,\, \text{ad}(P_{\rm PGL})\otimes K_X).
\end{equation}
Since the space of all holomorphic connections on $P_{\rm PGL}$ is an
affine space for $H^0(X,\, \text{ad}(P_{\rm PGL})\otimes K_X)$, the
subspace $H^0(X,\, K^{\otimes 2}_X)$ in \eqref{e32} acts
on the space of all holomorphic connections on $P_{\rm PGL}$.

There is a natural injective map from the space of all projective structures on $X$ 
(compatible with the complex structure of $X$) to the space of all holomorphic 
connections on $P_{\rm PGL}$; this follows from Lemma
\ref{lem1}. The image of this injective map is
an affine space for the vector space $H^0(X,\, K^{\otimes 2}_X)$ (it was shown
above that $H^0(X,\, K^{\otimes 2}_X)$ acts
on the space of all holomorphic connections on $P_{\rm PGL}$).
Using these two facts and \eqref{e32} it is deduced that 
the space of all projective structures on $X$ (compatible with the complex structure of 
$X$) is an affine space for the vector space $H^0(X,\, K^{\otimes 2}_X)$.

Take a transversely projective structure $P_1$ on the transversely holomorphic foliation
$(M,\, \widehat{\mathcal F})$. Fix data $\{(U_i,\, \sigma_i)\}_{i\in I}$ that gives $P_1$ (see
Definition \ref{def2}). Take another transversely projective structure $P_2$ on the
transversely holomorphic foliation $(M,\, \widehat{\mathcal F})$
given by $\{(U_i,\, \widehat{\sigma}_i)\}_{i\in I}$ (we may refine the
open covers of $M$ to assume that the two open covers coincide). From what is shown
above for projective structures on $X$ it follows that
$$P_1\big\vert_{U_i}- P_2\big\vert_{U_i}\ \,=\,\ \sigma^*_i \theta_i
$$
for every $i\, \in\, I$, where $\theta_i\, \in\, H^0(\sigma_i(U_i),\, K^{\otimes 2}_{\sigma_i(U_i)})$.
These locally defined sections $\theta_i$ patch together compatibly to give an element of
$H^0_{\mathcal F}(M,\, {\mathbb K}^{\otimes 2})$.

Conversely, given $\{(U_i,\, \sigma_i)\}_{i\in I}$ defining $P_1$, and $\theta'\, \in\,
H^0_{\mathcal F}(M,\, {\mathbb K}^{\otimes 2})$, consider the transversely projective structure
$P_1\big\vert_{U_i}+\theta'\big\vert_{U_i}$ on $U_i$ for every $i\,\in\, I$. These locally defined
transversely projective structures patch together compatibly to produce a 
transversely projective structure on entire $M$ for the 
$(M,\, \widehat{\mathcal F})$.

This way, the space of all transversely projective structures compatible with $\widehat{\mathcal F}$
is an affine space for $H^0_{\mathcal F}(M,\, {\mathbb K}^{\otimes 2})$.
\end{proof}

\section{Transversal opers}

As before, $\widehat{\mathcal F}$ is a transversely holomorphic foliation on $M$. Fix an integer
$r\, \geq\, 2$. Let ${\mathbf P}\, \longrightarrow\, M$ be a transversely holomorphic ${\mathbb C}
{\mathbb P}^{r-1}$--bundle on $M$ (see Section \ref{se3.2}). The fiber of $\mathbf P$ over any point
$x\, \in\, M$ will be denoted by ${\mathbf P}_x$. We have two foliations on $\mathbf P$ 
\begin{equation}\label{e34}
{\mathcal F}^{\mathbf P}\ \, \subset\ \, T^{\mathbb R}{\mathbf P}
\end{equation}
associated to the structure of a transversely ${\mathbb C}{\mathbb P}^{r-1}$--bundle (see \eqref{e33}) and
\begin{equation}\label{e35}
{\mathbb F}^{\mathbf P}\ \, \subset\ \, T^{\mathbb C}{\mathbf P}
\end{equation}
associated to the transversely holomorphic structure of ${\mathbf P}$ (see \eqref{hs});
note that ${\mathcal F}^{\mathbf P}\, \subset\, {\mathbb F}^{\mathbf P}$.

Take any nonnegative integer $j\, \leq\, r-1$. A \textit{holomorphic projective subbundle} of $\mathbf P$
of relative dimension $j$ is a $C^\infty$ subbundle $${\mathbf P}_j\ \subset\ {\mathbf P}$$ satisfying the
following three conditions:
\begin{enumerate}
\item For any $x\, \in\, M$, the fiber of ${\mathbf P}_j$ over $x$ is a complex
linear subspace of ${\mathbf P}_x$ of complex dimension $j$;

\item For any $x\, \in\, {\mathbf P}_j$, the tangent subspace $({\mathcal F}^{\mathbf P})_x
\, \subset\, T^{\mathbb R}_x {\mathbf P}$ (see \eqref{e34}) is contained in the subspace
$T^{\mathbb R}_x{\mathbf P}_j\, \subset\, T^{\mathbb R}_x{\mathbf P}$;

\item For any $x\, \in\, {\mathbf P}_j$, the tangent subspace $({\mathbb F}^{\mathbf 
P})_x\, \subset\, T^{\mathbb C}_x {\mathbf P}$ (see \eqref{e35}) is contained in the 
subspace $T^{\mathbb C}_x{\mathbf P}_j\, \subset\, T^{\mathbb C}_x{\mathbf P}$.
\end{enumerate}

\begin{remark}\label{rem1}
Note that the third condition implies the second condition because ${\mathcal F}^{\mathbf P}\,\subset\,
{\mathbb F}^{\mathbf P}$.\end{remark}
 A $C^\infty$ subbundle $${\mathbf P}_j\ \subset\ {\mathbf P}$$ satisfying the
first two condition (of the above three conditions) is called a $C^\infty$ projective subbundle of $\mathbf P$
of relative dimension $j$.

Let 
\begin{equation}\label{e36}
\varphi\, \ :\, \ P_{\rm PGL}\,\ \longrightarrow\,\ M
\end{equation}
be a transversely principal $\text{PGL}(r,{\mathbb C})$--bundle on $M$ (see Definition \ref{def0})
equipped with a flat connection $\nabla$ such that the restriction
of $\nabla$ in the direction of $\mathcal F$ coincides with the partial connection on $P_{\rm PGL}$
defining its transversal structure (see Definition \ref{def0}).

The flat connection $\nabla$ produces a transversely holomorphic structure on $P_{\rm PGL}$. This 
transversely holomorphic structure on $P_{\rm PGL}$ can be described as follows: Take the horizontal lift
$$
\widetilde{\nabla}\ \, :\ \, \varphi^*T^{\mathbb C}M\, \ \longrightarrow\,\ T^{\mathbb C}P_{\rm PGL}
$$
for $\nabla$, and restrict it to the subbundle $\varphi^*{\mathbb F}\, \subset\, 
\varphi^*T^{\mathbb C}M$ (see \eqref{e8c}). The subbundle $\widetilde{\nabla}
(\varphi^*{\mathbb F})\, \subset\, T^{\mathbb C}P_{\rm PGL}$ defines a transversely
holomorphic structure on $P_{\rm PGL}$ (see \eqref{hs}).

Let
\begin{equation}\label{e37}
p\,\ :\,\ {\mathbf P}\,\ \longrightarrow\ \, M
\end{equation}
the transversely ${\mathbb C}{\mathbb P}^{r-1}$--bundle on $M$ associated to the principal $\text{PGL}
(r,{\mathbb C})$--bundle $P_{\rm PGL}$ in \eqref{e36} (see \eqref{eta2} for the associated bundle).

The flat connection $\nabla$ on $P_{\rm PGL}$ produces a flat connection on the transversely ${\mathbb C}
{\mathbb P}^{r-1}$--bundle $\mathbf P$ in \eqref{e37}; this flat connection on $\mathbf P$ will be denoted
by $D$. Let
\begin{equation}\label{e39}
{\mathbb D}\,\ :\,\ p^*T^{\mathbb R}M\,\ \longrightarrow\,\ T^{\mathbb R}{\mathbf P}
\end{equation}
be the homomorphism defining the connection $D$. So the distribution ${\mathbb D}(p^*T^{\mathbb R}M)
\, \subset\, T^{\mathbb R}{\mathbf P}$ is the horizontal subspace for $D$. Note that the distribution
${\mathbb D}(p^*T^{\mathbb R}M)$ is integrable because the connection $D$ is flat.

The transversely ${\mathbb C}{\mathbb P}^{r-1}$--bundle $\mathbf P$ has a transversely 
holomorphic structure. Indeed, the transversely holomorphic structure on $P_{\rm PGL}$ 
induces a transversely holomorphic structure on the associated bundle $\mathbf P$. On 
the other hand, the flat connection $D$ on $\mathbf P$ produces a transversely 
holomorphic structure on $\mathbf P$. It is evident that these two transversely 
holomorphic structures on $\mathbf P$ actually coincide.

For every integer $0\, \leq\, j\, \leq\, r-2$, let
\begin{equation}\label{e38}
{\mathbf P}_j\,\ \subset\, \ {\mathbf P}
\end{equation}
be a holomorphic projective subbundle of $\mathbf P$ of relative dimension $j$ satisfying the following two conditions:
\begin{enumerate}
\item ${\mathbf P}_j\, \subset\, {\mathbf P}_{j+1}$ for all $0\, \leq\, j\, \leq\, r-2$.

\item For any $0\, \leq\, j\, \leq\, r-2$ and any $y\, \in\, {\mathbf P}_j$,
\begin{equation}\label{e44}
{\mathbb D}(T^{\mathbb R}_{p(y)} M)\,\ \subset\,\ T^{\mathbb R}_y {\mathbf P}_{j+1},
\end{equation}
where $\mathbb D$ is the homomorphism in \eqref{e39}
and $p$ is the projection in \eqref{e37}.
\end{enumerate}

Take any $0\, \leq\, j\, \leq\, r-2$. The manifold ${\mathbf P}_j$ is equipped with the foliation
\begin{equation}\label{e40}
{\mathcal F}_j\,\ :=\, \ {\mathcal F}^{\mathbf P}\big\vert_{{\mathbf P}_j}
\end{equation}
(see \eqref{e34}) because \eqref{e44} holds.

For any $r-1 \, \geq\, k\, >\, j$, consider the normal bundle
\begin{equation}\label{e41}
{\mathbf N}^k_j \,\ :=\, \ \left(T^{\mathbb R}{\mathbf P}_k\big\vert_{{\mathbf P}_j}\right)
\Big{/} T^{\mathbb R}{\mathbf P}_j\,\ \longrightarrow\,\ {\mathbf P}_j
\end{equation}
over ${\mathbf P}_j$. We will show that ${\mathbf N}^k_j$ has a transversely holomorphic structure for the
foliation ${\mathcal F}_j$ in \eqref{e40}.

First note that $(T^{\mathbb R}{\mathbf P}_j)/{\mathcal F}_j$ is a transversely 
holomorphic vector bundle; this transversely holomorphic structure is given by the 
transversely holomorphic structure of ${\mathbf P}_j$. Similarly, the quotient 
$\left(\left(T^{\mathbb R}{\mathbf P}_k \right)\big\vert_{{\mathbf 
P}_j}\right)\Big{/}{\mathcal F}_j$ is a transversely holomorphic vector bundle. Next 
note that $(T^{\mathbb R}{\mathbf P}_j)/{\mathcal F}_j$ is a transversely holomorphic 
subbundle of $\left(\left(T^{\mathbb R}{\mathbf P}_k\right)\big\vert_{{\mathbf 
P}_j}\right)\Big{/}{\mathcal F}_j$, and consequently, the corresponding quotient
\begin{equation}\label{e42}
\widetilde{\mathbf N}^k_j\,\ :=\,\ 
\left(\left(\left(T^{\mathbb R}{\mathbf P}_k\right)\big\vert_{{\mathbf P}_j}
\right)\Big{/}{\mathcal F}_j\right)\Big{/}\left((T^{\mathbb R}{\mathbf P}_j)/{\mathcal F}_j\right)
\end{equation}
is a transversely holomorphic vector bundle. The transversely real vector bundle underlying the
transversely holomorphic vector bundle $\widetilde{\mathbf N}^k_j$ in \eqref{e42} is identified with the
transversely real vector bundle ${\mathbf N}^k_j$ in \eqref{e41}. Using this identification between
${\mathbf N}^k_j$ and $\widetilde{\mathbf N}^k_j$, the
transversely holomorphic structure of $\widetilde{\mathbf N}^k_j$ induces a transversely holomorphic
structure on ${\mathbf N}^k_j$.

The restriction of the map $p$ in \eqref{e37} to ${\mathbf P}_j$ will be denoted by $p_j$.
Now we will construct a holomorphic homomorphism of transversely holomorphic vector bundles
on ${\mathbf P}_j$
\begin{equation}\label{e43}
\Phi_j\, \ :\,\ p^*_j N^{1,0}\,\ \longrightarrow\,\ {\mathbf N}^{j+1}_j
\end{equation}
for all $0\, \leq\, j\, \leq\, r-2$, where $N^{1,0}$ is the transversely holomorphic vector bundle in
\eqref{ej2}. To construct $\Phi_j$, take any $y\, \in\, {\mathbf P}_j$ and denote $p_j(y)\, \in\,
M$ by $x$. Take any $v\, \in\, (p^*_j N^{1,0})_y\,=\, (N^{1,0})_x$. Take any $w\, \in\,
T^{\mathbb C}_xM\,=\,T^{\mathbb R}_xM\otimes{\mathbb C}$ such that the projection of $w$
to $N\otimes{\mathbb C}$ (see \eqref{e3} and \eqref{ej2}) coincides with $v$. Now consider
${\mathbb D}(w)\,\in\, T^{\mathbb C}_y {\mathbf P}_{j+1}$ (see \eqref{e44}). Also, take any
$\widetilde{w}\, \in\, T^{\mathbb C}_y {\mathbf P}_j$ that projects to $w$; in other words,
$dp_j(\widetilde{w})\,=\, w$, where
\begin{equation}\label{e45}
dp_j\,\ :\,\ T^{\mathbb C}{\mathbf P}_j \,\ \longrightarrow\,\ p^*_j T^{\mathbb C}M
\end{equation}
is the differential of the projection $p_j$. Let
\begin{equation}\label{e46}
v'\,\ \in\,\ (\widetilde{\mathbf N}^k_j)_y
\end{equation}
be the image of $\widetilde{w}-{\mathbb D}(w)$ in $(\widetilde{\mathbf N}^k_j)_y$. We 
will show that $v'$ does not depend on the choices of the lifts $w$ and $\widetilde{w}$.

Any two choices of $\widetilde{w}$ differ by an element in the kernel of the 
homomorphism $dp_j$ in \eqref{e45}. So, $v'$ does not depend on the choice of the lift 
$\widetilde{w}$. Any two choices of $w$ differ by an element of ${\mathcal 
F}_x\otimes{\mathbb C}\, \subset\, T^{\mathbb C}_xM$. Since
$$
{\mathbb D}({\mathcal F}_x)\,\ \subset\,\ T^{\mathbb R}_y {\mathbf P}_{j},
$$
where $\mathbb D$ is the homomorphism in \eqref{e39}, it follows that
$v'$ does not depend on the choice of $w$. The homomorphism $\Phi_j$ in \eqref{e43} sends any
$v\, \in\, (p^*_j N^{1,0})_y$, $y\, \in\, {\mathbf P}_j$, to $v'$ constructed in \eqref{e46}.
Using the two facts --- that ${\mathbf P}_i$ are transversely holomorphic subbundles of $\mathbf P$
and the transversely holomorphic structure of ${\mathbf P}_i$ is compatible with the
flat connection $D$ on $\mathbb P$ --- it is straightforward to deduce that $\Phi_j$ is
a transversely holomorphic homomorphism.

\begin{definition}\label{def3}
A ${\rm PGL}(r,{\mathbb C})$ \textit{transversely oper} is a triple
$(P_{\rm PGL},\, \nabla, \, \{{\mathbf P}_j\}^{r-2}_{0})$ as above (see \eqref{e36} and
\eqref{e38}) such that the homomorphism $\Phi_j$ in \eqref{e43} is an isomorphism for all
$0\, \leq\, j\, \leq\, r-2$.
\end{definition}

To give examples of ${\rm PGL}(r,{\mathbb C})$ transversely opers, let
\begin{equation}\label{p3}
((P,\, {\mathcal F}^P,\, {\mathbb F}^P),\, \nabla,\, s)
\end{equation}
be a transversely
projective structure on the transversely holomorphic foliation $\widehat{\mathcal F}$
(see Theorem \ref{thm0}). Let
\begin{equation}\label{p}
p\ :\ {\mathbb P}_1\ :=\ P\times^{{\rm PGL}(2,{\mathbb C})} {\mathbb C}{\mathbb P}^1
\ \longrightarrow\ M
\end{equation}
be the transversely holomorphic ${\mathbb C}{\mathbb P}^1$--bundle associated to $(P,\, {\mathcal F}^P,
\, {\mathbb F}^P)$. We recall from Theorem \ref{thm0} that $s$
in \eqref{p3} is holomorphic section of ${\mathbb P}_1$
satisfying the transversality condition for the flat connection $D$ on ${\mathbb P}_1$ induced by the
flat connection $\nabla$ on $P$. Let
$$
{\mathbb P}_{r-1} \ :=\ \text{Sym}^{r-1}_p({\mathbb P}_1)\ \longrightarrow\ M
$$
be the relative $(r-1)$--th symmetric product for the projection $p$ in \eqref{p}. So 
${\mathbb P}_{r-1}$ is a ${\mathbb C}{\mathbb P}^{r-1}$--bundle on $M$. The flat 
connection $D$ on ${\mathbb P}_1$ induces a flat connection on ${\mathbb P}_{r-1}$; this 
induced flat connection on ${\mathbb P}_{r-1}$ will be denoted by $\mathbb D$.
Consider the principal $\text{PGL}(r,{\mathbb C})$--bundle on $M$ obtained by
extending the structure group of the principal $\text{PGL}(2,{\mathbb C})$--bundle
$P$ using the homomorphism $\text{PGL}(2,{\mathbb C})\, \longrightarrow\,
\text{PGL}(r,{\mathbb C})$ given by the natural action of $\text{GL}(2,{\mathbb C})
\,=\, \text{Aut}({\mathbb C}^2)$ on $\text{Aut}({\rm Sym}^{r-1}({\mathbb C}^2))$.
The ${\mathbb C}{\mathbb P}^{r-1}$--bundle over $M$ associated to this
principal $\text{PGL}(r,{\mathbb C})$--bundle is identified with ${\mathbb P}_{r-1}$.
So the flat connection $D$ on ${\mathbb P}_1$ induces a flat connection on ${\mathbb P}_{r-1}$

Also, the transversely holomorphic structure on ${\mathbb P}_1$ induces a 
transversely holomorphic structure on ${\mathbb P}_{r-1}$. We note that this
transversely holomorphic structure on ${\mathbb P}_{r-1}$ coincides with the one given by the
flat connection $\mathbb D$ on ${\mathbb P}_{r-1}$ induced by $D$. Indeed, this follows immediately from the
fact that the transversely holomorphic structure on ${\mathbb P}_{1}$ coincides with the one given by the
flat connection $D$.

The ${\mathbb C}{\mathbb P}^{r-1}$--bundle ${\mathbb P}_{r-1}$ gives a principal ${\rm PGL}(r,
{\mathbb C})$--bundle ${\mathbf P}$ on $M$. The fiber of ${\mathbf P}$ over any $x\, \in\, M$
is the space of all biholomorphisms from ${\mathbb C}{\mathbb P}^{r-1}$ to the fiber
$({\mathbb P}_{r-1})_x$ of ${\mathbb P}_{r-1}$ over $x$. The above flat connection $\mathbb D$
on ${\mathbb P}_{r-1}$ produces a flat connection on the ${\rm PGL}(r,
{\mathbb C})$--bundle ${\mathbf P}$; this flat connection on ${\mathbf P}$ will be denoted by $\nabla$.

For any integer $1\, \leq\, i\, \leq\, r-1$, let
\begin{equation}\label{p4}
p_i\ :\ {\mathbb P}'_{i} \ :=\ \text{Sym}^{r-1}_p({\mathbb P}_1)\ \longrightarrow\ M
\end{equation}
be the relative $i$--th symmetric product for the projection $p$. Let
So ${\mathbb P}_{i}$ is a
${\mathbb C}{\mathbb P}^{i}$--bundle on $M$ which is equipped with a transversely holomorphic structure
and also a flat connection compatible with the transversely holomorphic structure.

We have an embedding
$$
\Psi_i\ :\ {\mathbb P}'_{i} \ \longrightarrow\ {\mathbb P}_{r-1}
$$
that sends any $z\, \in\, {\mathbb P}'_{i}$ to $z+(r-1-i)\cdot s(p_i(z))$, where $s$ and $p_i$ are
constructed in \eqref{p3} and \eqref{p4} respectively. Let
$$
\Psi_0\ :\ M \ \longrightarrow\ {\mathbb P}_{r-1}
$$
be the map defined by $x\, \longmapsto\, (r-1)\cdot s(x)$.
Note that $\Psi_{r-1}$ is the identity map
of ${\mathbb P}_{r-1}$. For any integer $0\, \leq\, i\, \leq\, r-1$,
the image $\Psi_i({\mathbb P}'_{i})\,
\subset\, {\mathbb P}_{r-1}$ will be denoted by ${\mathbb P}_{i}$. 

The data $({\mathbf P},\, \nabla,\, \{{\mathbb P}_j\}_{j=0}^{r-2})$ define a
transversely ${\rm PGL}(r,{\mathbb C})$--oper on the transversely
complex foliation $(M,\, \widehat{\mathcal F})$.

\section{Independence of the projective bundle and filtration}

In this section the following will be shown: Take two ${\rm PGL}(r,{\mathbb C})$ transversely
opers $(P^1_{\rm PGL},\, \nabla^1, \, \{{\mathbf P}^1_j\}^{r-2}_{0})$ and
$(P_{\rm PGL}^2,\, \nabla^2, \, \{{\mathbf P}^2_j\}^{r-2}_{0})$ on the transversely
complex foliation $(M,\, \widehat{\mathcal F})$; see Definition \ref{def3}.
Let ${\mathbf P}^1_{r-1}$ (respectively, ${\mathbf P}^2_{r-1}$) be the transversely
holomorphic ${\mathbb C}{\mathbb P}^{r-1}$--bundle on $(M,\, \widehat{\mathcal F})$
associated to the transversely holomorphic principal $\text{PGL}(r,{\mathbb C})$--bundle
$P^1_{\rm PGL}$ (respectively, $P^2_{\rm PGL}$). Then there is a natural holomorphic
isomorphism $\Phi\, :\, {\mathbf P}^1_{r-1}\, \longrightarrow\, {\mathbf P}^2_{r-1}$
such that $\Phi({\mathbf P}^1_j)\,=\, {\mathbf P}^2_j$ for all $0\,\leq\, j\, \leq\, r-1$.

We first recall a property of projective bundles that plays a very useful role here.

Take a complex manifold $X$ and a holomorphic ${\mathbb C}{\mathbb P}^d$--bundle
$$
\gamma\ :\ {\mathcal P}\ \longrightarrow\ X
$$
on $X$. Let $T^\gamma\, \subset\, T^{1,0}{\mathcal P}$
be the holomorphic relative tangent bundle for the projection $\gamma$. In other words,
$T^\gamma$ is the kernel of the differential $d\gamma\, :\, T^{1,0}{\mathcal P}\, \longrightarrow\,
\gamma^* T^{1,0}X$ of the map $\gamma$. Consider the holomorphic vector bundle
$\gamma^*\gamma_* T^\gamma$ on ${\mathcal P}$. It has a homomorphism to the relative jet bundle
$$
{\mathcal I} \ :\ \gamma^*\gamma_* T^\gamma\ \longrightarrow\ J^1_\gamma (T^\gamma)
$$
given by the restriction of sections to the first order neighborhoods of points. Note that the
fiber of $J^1_\gamma (T^\gamma)$ over any $z\, \in\, {\mathcal P}$ is the fiber
$J^1(T^\gamma\big\vert_{\gamma^{-1}(\gamma (z))})_z$ over $z\, \in\, \gamma^{-1}(\gamma (z))$
of the first jet bundle of the holomorphic vector bundle $T^\gamma\big\vert_{\gamma^{-1}(\gamma (z))}\,
\longrightarrow\, \gamma^{-1}(\gamma (z))$. So $J^1_\gamma (T^\gamma)$ fits in the following short
exact sequence of holomorphic vector bundles on $\mathcal P$:
\begin{equation}\label{d1}
0\ \longrightarrow\ T^\gamma\otimes (T^\gamma)^*\ \longrightarrow\ J^1_\gamma (T^\gamma)
\ \longrightarrow\ T^\gamma \ \longrightarrow\ 0.
\end{equation}
Let $\text{ad}(T^\gamma)\, \subset\, \text{End}(T^\gamma)\,=\, T^\gamma\otimes (T^\gamma)^*$ be the
subbundle of co-rank one given by the sheaf of endomorphisms of trace zero. From \eqref{d1} we have
the following short
exact sequence of holomorphic vector bundles on $\mathcal P$:
\begin{equation}\label{d4}
0\, \longrightarrow\, {\mathcal O}_{\mathcal P} \,=\,
(T^\gamma\otimes (T^\gamma)^*)/\text{ad}(T^\gamma)\, \longrightarrow\, J^1_\gamma (T^\gamma)/
\text{ad}(T^\gamma)\, =:\, {\mathcal J}^1_\gamma (T^\gamma)
\, \longrightarrow\, T^\gamma \, \longrightarrow\, 0.
\end{equation}
Then there is a natural holomorphic isomorphism
\begin{equation}\label{d3}
\beta\ :\ \gamma^*{\mathcal P} \ \stackrel{\sim}{\longrightarrow}\ P({\mathcal J}^1_\gamma (T^\gamma)),
\end{equation}
where $P({\mathcal J}^1_\gamma (T^\gamma)$ is the projective bundle on $\mathcal P$ that parametrizes
the lines in the fibers of the vector bundle ${\mathcal J}^1_\gamma (T^\gamma)$ in \eqref{d4} and
$\gamma^*{\mathcal P}$ is the pullback of the ${\mathbb C}{\mathbb P}^d$--bundle.

To see the isomorphism in \eqref{d3}, take a complex vector space $W$ of dimension $d+1$. Denote by
$P(W)$ the projective space parametrizing the lines in $W$. We have $$H^0(P(W),\, T^{1,0}W)\ =\
\text{ad}(W)$$ (trace zero endomorphisms of $W$). Take a point $x\,\in\, P(W)$, and consider the
evaluation map
\begin{equation}\label{d5}
I_x \ : \ \text{ad}(W)\ =\ H^0(P(W),\, T^{1,0}W)\ \longrightarrow\ {\mathcal J}^1 (T^{1,0}W)_x ,
\end{equation}
where ${\mathcal J}^1 (T^{1,0}W)_x$ is the quotient of ${J}^1 (T^{1,0}W)_x$ constructed as in
\eqref{d4} (set the base manifold $X$ to be a point in \eqref{d4}). Let $L\, \subset\, W$ be the line in $W$
represented by $x$. Consider the quotient map $\alpha\, :\, \text{ad}(W)\, \longrightarrow\,
W\otimes L^*$ that sends any $A\,\in\, \text{ad}(W)$ to the homomorphism $L\, \longrightarrow \, W$
that maps any $\ell\, \in\, L$ to $A(\ell)$. The map $I_x$ in \eqref{d5} factors through $\alpha$, and
the resulting map $W\otimes L^*\,\longrightarrow\, {\mathcal J}^1 (T^{1,0}W)_x$ is an isomorphism.
Thus $P(W)\,=\, P(W\otimes L)\,=\, P({\mathcal J}^1 (T^{1,0}W)_x)$. This pointwise construction gives
the isomorphism $\beta$ in \eqref{d3}.

Take a transversely ${\rm PGL}(r,{\mathbb C})$--oper
\begin{equation}\label{e47}
(P_{\rm PGL},\ \nabla, \ \{{\mathbf P}_j\}^{r-2}_{0})
\end{equation}
on the transversely complex foliation $(M,\, \widehat{\mathcal F})$. Let
\begin{equation}\label{e48}
p\ :\ {\mathbf P}_{r-1}\ :=\ P_{\rm PGL}\times^{{\rm PGL}(r,{\mathbb C})}{\mathbb C}{\mathbb P}^{r-1}
\ \longrightarrow\ M
\end{equation}
be the associated transversely holomorphic ${\mathbb C}{\mathbb P}^{r-1}$--bundle on transversely complex
foliation $(M,\, \widehat{\mathcal F})$. The flat connection $\nabla$
on $P_{\rm PGL}$ induces a flat connection
\begin{equation}\label{e48a}
D
\end{equation}
on ${\mathbf P}_{r-1}$ which is compatible with the transversely holomorphic structure. For any
$0\, \leq\, j\, \leq\, r-2$, let
\begin{equation}\label{e49}
p_j\ :\ {\mathbf P}_j\ \longrightarrow\ M
\end{equation}
be the natural projection. So $p_j$ is the restriction of $p$ in \eqref{e48} (recall that
${\mathbf P}_j\, \subset\, {\mathbf P}_{r-1}$), and $p_0$ is transversely biholomorphic.
Let
\begin{equation}\label{e50}
\sigma\ :=\ p^{-1}_0 \ :\ M\ \longrightarrow\ {\mathbf P}_{r-1}
\end{equation}
be the section of the projection $p$ in \eqref{e48}.

Let $T^p\, \subset\, T{\mathbf P}_{r-1}\otimes {\mathbb C}$ be the relative transversely 
holomorphic tangent bundle over ${\mathbf P}_{r-1}$ for the projection $p$. Construct 
the transversely holomorphic bundle
$$
{\mathcal J}^1_p (T^p)\ \longrightarrow\ {\mathbf P}_{r-1}
$$
(as in \eqref{d4}) which is a quotient of $J^1_p (T^p)$
\begin{equation}\label{e51a}
\delta\ :\ J^1_p (T^p)\ \longrightarrow\ {\mathcal J}^1_p (T^p).
\end{equation}
Let
\begin{equation}\label{e51}
\varphi\ :\ {\mathcal J}^1_p (T^p)\ \longrightarrow\ T^p
\end{equation}
be the projection induced by the natural projection $J^1_p (T^p)\,\longrightarrow\,T^p$.

Consider the pullback $\sigma^*{\mathcal J}^1_p (T^p)$ on $M$, where $\sigma$ is the section
in \eqref{e50}. The isomorphism in \eqref{d3} produces a transversely holomorphic isomorphism
\begin{equation}\label{e52}
B\ :\ {\mathbf P}_{r-1} \ \longrightarrow\ P(\sigma^*{\mathcal J}^1_p (T^p))
\ =\ \sigma^* P({\mathcal J}^1_p (T^p))
\end{equation}
of transversely holomorphic ${\mathbb C}{\mathbb P}^{r-1}$--bundles.

For $0\, \leq\, j\, \leq\, r-2$, consider the transversely holomorphic relative tangent bundle
\begin{equation}\label{tpj}
T^{p_j}\,\ \longrightarrow\,\ {\mathbf P}_j
\end{equation}
for the projection $p_j$ in \eqref{e49}. The vector bundle
$\sigma^*T^p\, \longrightarrow\, M$ (see \eqref{e51} and \eqref{e50}) has the following filtration of
subbundles:
$$
0\,=\, \sigma^* T^{p_0}\, \subset\, \sigma^* T^{p_1} \, \subset\, \cdots \, \subset\, 
\sigma^* T^{p_{r-2}} \, \subset\, \sigma^*T^p,
$$
where $T^{p_j}$ is constructed in \eqref{tpj}. Using the projection $\sigma^*\varphi\ :\ 
\sigma^*{\mathcal J}^1_p (T^p)\ \longrightarrow\ \sigma^*T^p$ (see \eqref{e51}), this 
gives the following filtration of subbundles of $\sigma^*{\mathcal J}^1_p (T^p)$:
\begin{equation}\label{e53}
0\ \subset\ (\sigma^*\varphi)^{-1}(\sigma^* T^{p_0})\ \subset\ (\sigma^*\varphi)^{-1}(\sigma^* T^{p_1}) \
\subset\ \cdots 
\end{equation}
$$
\subset\ (\sigma^*\varphi)^{-1}(\sigma^* T^{p_{r-2}})\ \subset\
(\sigma^*\varphi)^{-1}(\sigma^*T^p)\ =\ \sigma^* {\mathcal J}^1_p (T^p).
$$
For $0\, \leq\, j\, \leq\, r-2$, we have
\begin{equation}\label{e54}
B({\mathbf P}_{j})\,\ =\, \ P((\sigma^*\varphi)^{-1}(\sigma^* T^{p_j})),
\end{equation}
where $B$ is the isomorphism in \eqref{e52}. Indeed, \eqref{e54} follows from the condition in
Definition \ref{def3} that the homomorphism $\Phi_j$ in \eqref{e43} is an isomorphism for all
$0\, \leq\, j\, \leq\, r-2$.

Take another ${\rm PGL}(r,{\mathbb C})$ transversely oper
\begin{equation}\label{e55}
(P'_{\rm PGL},\ \nabla', \ \{{\mathbf P}'_j\}^{r-2}_{0})
\end{equation}
on the transversely complex foliation $(M,\, \widehat{\mathcal F})$. As in \eqref{e48}, let
\begin{equation}\label{e56}
p'\ :\ {\mathbf P}'_{r-1}\ :=\ P'_{\rm PGL}\times^{{\rm PGL}(r,{\mathbb C})}{\mathbb C}{\mathbb P}^{r-1}
\ \longrightarrow\ M
\end{equation}
be the associated transversely holomorphic ${\mathbb C}{\mathbb P}^{r-1}$--bundle on
the transversely complex foliation $(M,\, \widehat{\mathcal F})$.

\begin{theorem}\label{thm2}
There is a natural transversely holomorphic isomorphism
$$
\Psi\ :\ {\mathbf P}_{r-1}\ \longrightarrow\ {\mathbf P}'_{r-1}
$$
of transversely holomorphic ${\mathbb C}{\mathbb P}^{r-1}$--bundles such that
$$
\Psi({\mathbf P}_j)\,\ =\,\ {\mathbf P}'_j
$$
for all $0\, \leq \, j \,\leq\, r-2$.
\end{theorem}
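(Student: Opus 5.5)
We plan to show that both ${\mathbf P}_{r-1}$ and ${\mathbf P}'_{r-1}$, together with their filtrations, are canonically identified with the same model. The model is the projectivized transversal jet bundle ${\mathbb P}(J^{r-1}_{\widehat{\mathcal F}}(Q))$ for a suitable transversely holomorphic line bundle $Q$, equipped with its filtration by the kernels of the projections $J^{r-1}_{\widehat{\mathcal F}}(Q)\longrightarrow J^{k}_{\widehat{\mathcal F}}(Q)$. Then $\Psi$ is the composition of one identification with the inverse of the other. This is the higher-rank analogue of Theorem \ref{thm1}.

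The starting point is the isomorphism $B$ in \eqref{e52} together with \eqref{e54}. Together they identify $({\mathbf P}_{r-1},\{{\mathbf P}_j\})$ with the projectivization of $\sigma^*{\mathcal J}^1_p(T^p)$, filtered by $(\sigma^*\varphi)^{-1}(\sigma^*T^{p_j})$.

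Step 1: identify the successive quotients of this filtration. By \eqref{d4}, the first piece is ${\mathcal O}$. Since $\Phi_j$ in \eqref{e43} is an isomorphism, we have $\sigma^*(T^{p_{j+1}}/T^{p_j})\cong \sigma^*{\mathbf N}^{j+1}_j\cong N^{1,0}\otimes(\text{previous quotient})$, with the twisting arranged as in \eqref{e43}. Hence the graded pieces are $(N^{1,0})^{\otimes j}$ for $0\le j\le r-1$, up to a common twist. These depend only on $\widehat{\mathcal F}$.

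Step 2: build a canonical isomorphism of filtered transversely holomorphic bundles
$$\sigma^*{\mathcal J}^1_p(T^p)\otimes L\ \stackrel{\sim}{\longrightarrow}\ J^{r-1}_{\widehat{\mathcal F}}(Q),$$
where $L:=\sigma^*{\mathcal O}(1)$-type line bundle $(\sigma^*{\mathcal J}^1_p(T^p))_0^{*}\otimes(\text{top graded piece})^{1/(r-1)}$ is chosen so that the graded pieces match \eqref{e26}. Projectivization kills this twist, so we never need to choose $L$ globally. Indeed, working locally with the projectivizations suffices, and any local ambiguity is a scalar. The map is defined via differential operators, using the flat connection $D$ of \eqref{e48a}. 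Locally on a simply connected $U$, let $E$ be the space of $D$-flat sections of the underlying vector bundle ($\cong{\mathbb C}^r$). Each $e\in E^*$ gives a transversely holomorphic section of the dual line bundle of the section $\sigma$ defined by ${\mathbf P}_0$. Taking its $(r-1)$-jet along the transverse direction yields a map $E^*\to J^{r-1}_{\widehat{\mathcal F}}(\sigma^*{\mathcal O}(1))$. This is the same evaluation-map mechanism as \eqref{e17} in Lemma \ref{lem1}. The oper condition (all $\Phi_j$ isomorphisms) is exactly what makes this map an isomorphism carrying the flag of annihilators of ${\mathbf P}_j$ to the jet filtration. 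The proof is by induction on $j$, comparing symbols with Step 1.

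Step 3: canonicity. Different flat trivializations change the map by ${\rm PGL}(r,{\mathbb C})$ acting on $E$, which is absorbed. Hence the local identifications glue, exactly as the ${\rm PGL}(2,{\mathbb C})$-equivariance of Lemma \ref{lem1} was used in Theorem \ref{thm1}. We must also check that $Q=\sigma^*{\mathcal O}(1)$ is determined by $\widehat{\mathcal F}$ up to the projectivization ambiguity. Step 1 gives $Q^{\otimes(r-1)}$-type relations with $N^{1,0}$, for instance $Q^{-2}\cong N^{1,0}$ up to twist when $r=2$, recovering Theorem \ref{thm1}. Since we only need ${\mathbb P}(J^{r-1}(Q))$, the projectivized jet bundle can be defined intrinsically from local $(r-1)$-st roots of $(N^{1,0})^*$. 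Local roots differ by roots of unity, which act trivially on the projectivization.

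Main obstacle: Step 2, namely proving that the jet evaluation map is an isomorphism respecting filtrations. The key input is the transversality built into Definition \ref{def3}. I would first try induction on $j$: show that the kernel of $E^*\to J^{j}$ is the annihilator of the span of ${\mathbf P}_j$, using that $D$ moves ${\mathbf P}_j$ into ${\mathbf P}_{j+1}$ by \eqref{e44}, and that $\Phi_j$ is an isomorphism. This gives exactly one new transverse derivative at each step. Applying the construction to both opers and composing yields $\Psi$ with $\Psi({\mathbf P}_j)={\mathbf P}'_j$.
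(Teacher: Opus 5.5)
Your route differs from the paper's, and its central step is sound. The paper works through the adjoint bundle $p_*T^p$. It maps $p_*T^p$ onto $\sigma^*{\mathcal J}^1_p(T^p)$ by $\widehat\delta$ (i.e.\ $A\mapsto A\vert_\ell$), and onto $J^{r-1}_{\widehat{\mathcal F}}((N^{1,0})^{\otimes(r-1)})$ by $\rho$ (the transversal $(r-1)$-jet of the ${\mathbf N}^{r-1}_{r-2}$-component of the vertical field along $\sigma$). It then asserts that the two kernels coincide and combines the resulting isomorphism with $B$ and \eqref{e54}.

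You instead evaluate flat linear forms on the tautological line $\ell=\sigma^*{\mathcal O}(-1)$, getting $E^*\to J^{r-1}_{\widehat{\mathcal F}}(\ell^*)$, $e\mapsto j^{r-1}(e\vert_\ell)$. The oper condition gives $F_k=F_{k-1}+{\rm span}(s^{(k)})$ for a local transversely holomorphic frame $s$ of $\ell$. So this map is an isomorphism with $\ker(E^*\to J^k_{\widehat{\mathcal F}}(\ell^*))=F_k^\perp$. This is the classical osculating-flag argument, and it makes $B$, ${\mathcal J}^1_p(T^p)$ and your Step 1 unnecessary.

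It also avoids the paper's kernel comparison, which holds for $r=2$ but not for $r\geq 3$. Take the model $r=3$ oper with $\ell$ spanned by $v(y)=(1,y,y^2)$ and $F_1^\perp$ spanned by $\phi(y)=(y^2,-2y,1)$. The trace-free matrix $A$ whose only nonzero entry is a $1$ in position $(3,2)$ kills $v(0)$, while $\langle\phi(y),Av(y)\rangle=y$ has nonzero $1$-jet.

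Two bookkeeping corrections are needed. First, your map identifies the \emph{dual} bundle ${\mathbb P}(E^*)$ with the lines in $J^{r-1}_{\widehat{\mathcal F}}(Q)$. So the model for $({\mathbf P}_{r-1},\{{\mathbf P}_j\})$ is the bundle of lines in $J^{r-1}_{\widehat{\mathcal F}}(Q)^*$, with ${\mathbf P}_j$ corresponding to the lines in $J^{j}_{\widehat{\mathcal F}}(Q)^*$. Second, discard the isomorphism $\sigma^*{\mathcal J}^1_p(T^p)\otimes L\cong J^{r-1}_{\widehat{\mathcal F}}(Q)$ in Step 2. Your evaluation gives an isomorphism from $E^*$, not from a twist of $\sigma^*{\mathcal J}^1_p(T^p)=E\otimes\ell^*$. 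Since every identification is induced by a linear isomorphism, dualizing at the end is harmless.

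The genuine gap is Step 3, which is exactly what makes the model independent of the oper; as written it is incorrect. Step 1 says nothing about $Q=\ell^*$, because $\ell$ cancels in the graded pieces $(N^{1,0})^{\otimes j}$ of $E\otimes\ell^*$. For $r=2$ one has $Q^{\otimes 2}\cong N^{1,0}$, not $Q^{-2}\cong N^{1,0}$. And $Q$ is not a local $(r-1)$-st root of $(N^{1,0})^*$.

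The missing input is the determinant:
\begin{enumerate}
\item On a simply connected open set, lift the flat ${\rm PGL}(r,{\mathbb C})$-bundle to a flat ${\rm SL}(r,{\mathbb C})$-bundle. Then $\det E$ has a flat, hence transversely holomorphic, trivialization, unique up to a constant.
\item Iterating the oper isomorphisms $(F_{j-1}/F_{j-2})\otimes N^{1,0}\cong F_j/F_{j-1}$ gives $F_j/F_{j-1}\cong\ell\otimes(N^{1,0})^{\otimes j}$.
\item Hence $\det E\cong\ell^{\otimes r}\otimes(N^{1,0})^{\otimes r(r-1)/2}$, which yields a canonical isomorphism $Q^{\otimes r}\cong(N^{1,0})^{\otimes r(r-1)/2}$.
\item For a second oper, $Q'\otimes Q^*$ then has a canonically trivialized $r$-th power. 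So $Q\cong Q'$ locally, canonically up to $r$-th roots of unity and constants.
\item These act on $J^{r-1}_{\widehat{\mathcal F}}$ by scalars, hence trivially on projectivizations, so the local identifications of the two models glue.
\end{enumerate}
This step cannot be skipped. Two local isomorphisms $Q\cong Q'$ differ by a transversely holomorphic function $g$, which acts on $J^{r-1}_{\widehat{\mathcal F}}(Q)$ through $j^{r-1}(g)$, not by a scalar unless $g$ is constant. So without a canonical choice the local identifications need not glue.
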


\begin{proof}
Consider the normal bundle ${\mathbf N}^{r-1}_{r-2}\, \longrightarrow\, {\mathbf P}_{r-2}$
constructed in \eqref{e41}. From the condition in
Definition \ref{def3} that the homomorphism $\Phi_j$ in \eqref{e43} is an isomorphism for all
$0\, \leq\, j\, \leq\, r-2$ it follows that
\begin{equation}\label{e57}
\sigma^*{\mathbf N}^{r-1}_{r-2}\,\ =\,\ (N^{1,0})^{\otimes (r-1)},
\end{equation}
where $\sigma$ and $N^{1,0}$ are constructed in \eqref{e50} and \eqref{ej2} respectively.
More generally, we have $\sigma^*{\mathbf N}^j_{j-1}\,=\, (N^{1,0})^{\otimes j}$.

Consider the sheaf of transversely holomorphic sections of the transversely holomorphic vector bundle
$T^p$ on ${\mathbf P}_{r-1}$ (see \eqref{e51}); this sheaf will also be denoted by $T^p$.
Its direct image
\begin{equation}\label{e58}
p_*T^p\,\ \longrightarrow\, \ M
\end{equation}
is equipped with a flat connection induced by the flat connection $D$ on
${\mathbf P}_{r-1}$ (see \eqref{e48a}). This flat connection on $p_*T^p$, which
will be denoted by $\mathbb D$, is compatible with the transversely holomorphic structure
of $p_*T^p$.

Take a point $y\, \in\, M$, and also take a flat section $\lambda$ of $p_*T^p$ defined on
an open neighborhood $U_y\, \subset\, M$ of $y$.
So $\lambda$ gives a section of $T^p$ on $p^{-1}(U)$; denote this section of $T^p$ on $p^{-1}(U)$
by $\widetilde{\lambda}$. Restrict $\widetilde{\lambda}$ to $\sigma(U)\,\subset\,
{\mathbf P}_{r-1}$, where $\sigma$ is the section in \eqref{e50}; so we have
\begin{equation}\label{e58a}
\widetilde{\lambda}'\ :=\ \widetilde{\lambda}\big\vert_{\sigma(U)}\ :\
\sigma(U)\ \longrightarrow\ T^p\big\vert_{\sigma(U)}.
\end{equation}

Let
$$
\iota\ :\ {\mathbf P}_{r-2}\ \hookrightarrow\ {\mathbf P}_{r-1}
$$
be the inclusion map. We have a natural quotient map
$$
\rho_p\ :\ \iota^* T^p\ \longrightarrow\ {\mathbf N}^{r-1}_{r-2},
$$
where ${\mathbf N}^{r-1}_{r-2}$ is constructed in \eqref{e41}. Composing this
$\rho_p$ with $\widetilde{\lambda}'$ in \eqref{e58a}, we have
$$
\rho_p\circ \widetilde{\lambda}'\,\ :\,\ \sigma(U)\,\ \longrightarrow\,\
{\mathbf N}^{r-1}_{r-2}\big\vert_{\sigma(U)}.
$$
This map $\rho_p\circ\widetilde{\lambda}'$ and the isomorphism in \eqref{e57} together produce a
map
$$
\lambda_2\ :\ U\ \longrightarrow\ (N^{1,0})^{\otimes (r-1)}\big\vert_U.
$$
The above construction of $\lambda_2$ from $\lambda$ yields a homomorphism
\begin{equation}\label{e59}
\rho\ :\ p_*T^p\ \longrightarrow\ J^{r-1}_{\widehat{\mathcal F}}((N^{1,0})^{\otimes (r-1)});
\end{equation}
see \eqref{e25} for the transversal jet bundle $J^{r-1}_{\widehat{\mathcal F}}((N^{1,0})^{\otimes (r-1)})$.

As before, take a point $y\, \in\, M$, and also take a flat section ${\mathbf\eta}$ of $p_*T^p$ defined on
an open neighborhood $U_y\, \subset\, M$ of $y$. The section of $T^p$ over $p^{-1}(U)$ given by
${\mathbf\eta}$ will be denoted by $\widetilde{\mathbf\eta}$.
Restricting $\widetilde{\mathbf\eta}$ to the first order infinitesimal neighborhood of $\sigma(z)$,
where $\sigma$ is the section in \eqref{e50}, 
we get a homomorphism $p_*T^p\, \longrightarrow\, \sigma^* J^1_p (T^p)$, and composing it with
$\delta$ in \eqref{e51a} a homomorphism
$$
\widehat{\delta}\ :\ p_*T^p \ \longrightarrow\ \sigma^* {\mathcal J}^1_p (T^p)
$$
is obtained. The kernel of this homomorphism $\widehat{\delta}$ coincides with the kernel of
$\rho$ in \eqref{e59}. Consider the identity map of $p_*T^p$; using
$\widehat{\delta}$ and $\delta$, it produces a
transversely holomorphic isomorphism
\begin{equation}\label{e60}
\widehat{\textbf{C}}\ :\ J^{r-1}_{\widehat{\mathcal F}}((N^{1,0})^{\otimes (r-1)})
\ \stackrel{\sim}\longrightarrow\ \sigma^*{\mathcal J}^1_p (T^p).
\end{equation}

The transversely holomorphic vector bundle $J^{r-1}_{\widehat{\mathcal F}}((N^{1,0})^{\otimes (r-1)})$
has the following filtration of transversely holomorphic subbundles given by the lower order 
transversal jet bundles:
\begin{equation}\label{e61}
J^{r-1}_{\widehat{\mathcal F}}((N^{1,0})^{\otimes (r-1)})\, \longrightarrow\,
J^{r-2}_{\widehat{\mathcal F}}((N^{1,0})^{\otimes (r-1)}) \, \longrightarrow\, \cdots
\, \longrightarrow\, J^{2}_{\widehat{\mathcal F}}((N^{1,0})^{\otimes (r-1)})
\end{equation}
$$
\, \longrightarrow\, J^{1}_{\widehat{\mathcal F}}((N^{1,0})^{\otimes (r-1)})
\, \longrightarrow\, J^{0}_{\widehat{\mathcal F}}((N^{1,0})^{\otimes (r-1)})\,=\,
(N^{1,0})^{\otimes (r-1)} \, \longrightarrow\, 0.
$$
The isomorphism $\widehat{\textbf{C}}$ in \eqref{e60} takes the filtration of
$J^{r-1}_{\widehat{\mathcal F}}((N^{1,0})^{\otimes (r-1)})$ in \eqref{e61} to the
filtration of $\sigma^*{\mathcal J}^1_p (T^p)$ in \eqref{e53}.

Let
\begin{equation}\label{e62}
\textbf{C}\ :\ P(J^{r-1}_{\widehat{\mathcal F}}((N^{1,0})^{\otimes (r-1)}))
\ \stackrel{\sim}\longrightarrow\ P(\sigma^*{\mathcal J}^1_p (T^p))\ =\
\sigma^* P({\mathcal J}^1_p (T^p))
\end{equation}
be the transversely holomorphic isomorphism of transversely holomorphic ${\mathbb C}
{\mathbb P}^{r-1}$--bundles given by $\widehat{\textbf{C}}$ in \eqref{e60}.

Combining the isomorphism $\textbf{C}$ in \eqref{e62} with the isomorphism $B$ in \eqref{e52},
we get a transversely holomorphic isomorphism
\begin{equation}\label{e63}
\textbf{C}^{-1}\circ B\ :\ {\mathbf P}_{r-1} \ \longrightarrow\ 
P(J^{r-1}_{\widehat{\mathcal F}}((N^{1,0})^{\otimes (r-1)}))
\end{equation}
of transversely holomorphic ${\mathbb C}{\mathbb P}^{r-1}$--bundles.

Since the isomorphism $\widehat{\textbf{C}}$ in \eqref{e60} takes the filtration of
$J^{r-1}_{\widehat{\mathcal F}}((N^{1,0})^{\otimes (r-1)})$ in \eqref{e61} to the
filtration of $\sigma^*{\mathcal J}^1_p (T^p)$ in \eqref{e53}, using \eqref{e54} we conclude that
the isomorphism $\textbf{C}^{-1}\circ B$ in \eqref{e63} takes the filtration of projective subbundles
$\{{\mathbf P}_j\}^{r-2}_{0}$ of ${\mathbf P}_{r-1}$ to the filtration of projective subbundles of
$P(J^{r-1}_{\widehat{\mathcal F}}((N^{1,0})^{\otimes (r-1)}))$ given by the filtration of
vector bundles in \eqref{e61}.

Therefore, the projective bundle ${\mathbf P}_{r-1}$ and its filtration of subbundles
$\{{\mathbf P}_j\}^{r-2}_{0}$ is identified with a filtered projective bundle which does not
depend on the oper. This completes the proof.
\end{proof}

\section*{Declaration}

No data were used or generated in this project. The authors do not have any conflict of 
interests.


\end{document}